%------------------------------------------------------------------------------
% Beginning of journal.tex
%------------------------------------------------------------------------------
%
% AMS-LaTeX version 2 sample file for journals, based on amsart.cls.
%
%        ***     DO NOT USE THIS FILE AS A STARTER.      ***
%        ***  USE THE JOURNAL-SPECIFIC *.TEMPLATE FILE.  ***
%
% Replace amsart by the documentclass for the target journal, e.g., tran-l.
%
\documentclass[12pt]{amsart}
\usepackage{amssymb,latexsym}
\usepackage{amsfonts}
\usepackage{amsmath}
\usepackage[colorlinks,linkcolor=blue,anchorcolor=blue,citecolor=blue]{hyperref}

\newcommand\C{{\mathbb C}}

\newcommand\Q{{\mathbb Q}}

\newcommand\Z{{\mathbb Z}}

\newcommand\N{{\mathbb N}}

\newcommand\h{\mathrm{h}}
\newcommand\al{\alpha}
\newcommand\ga{\gamma}
\newcommand\be{\beta}

\newtheorem{theorem}{Theorem}[section]
\newtheorem{lemma}[theorem]{Lemma}

\newtheorem{corollary}[theorem]{Corollary}

\theoremstyle{remark}
\newtheorem{remark}[theorem]{Remark}

\numberwithin{equation}{section}

%    Absolute value notation

%    Blank box placeholder for figures (to avoid requiring any
%    particular graphics capabilities for printing this document).

\def\mand{\qquad \text{and} \qquad}

\def\({\left(}
\def\){\right)}

\def\rf#1{\left\lceil#1\right\rceil}

\def\cL{{\mathcal L}}

\begin{document}

\title[Explicit form of Cassels' $p$-adic embedding theorem]
{Explicit form of Cassels' $p$-adic embedding theorem for number fields}

%    Information for first author
\author{Art\= uras Dubickas}
\address{Department of Mathematics and Informatics, Vilnius University, Naugarduko 24,
LT-03225 Vilnius, Lithuania}
\email{arturas.dubickas@mif.vu.lt}

%    \thanks will become a 1st page footnote.
%\thanks{The first author was supported in part by NSF Grant \#000000.}

%    Information for second author
\author{Min Sha}
%    Address of record for the research reported here
\address{School of Mathematics and Statistics, University of New South Wales,
 Sydney NSW 2052, Australia}
%    Current address
%\curraddr{Department of Mathematics and Statistics,
%Case Western Reserve University, Cleveland, Ohio 43403}
\email{shamin2010@gmail.com}

\author{Igor E. Shparlinski}
%    Address of record for the research reported here
\address{School of Mathematics and Statistics, University of New South Wales,
 Sydney NSW 2052, Australia}
%    Current address
%\curraddr{Department of Mathematics and Statistics,
%Case Western Reserve University, Cleveland, Ohio 43403}
\email{igor.shparlinski@unsw.edu.au}

%    General info
\subjclass[2010]{Primary 11R04, 11S85; Secondary 11G50, 11R09, 11R18}

%\date{Januar1 1, 2001 and, in revised form, June 22, 2001.}

%\dedicatory{This paper is dedicated to our advisors.}

\keywords{Number field, $p$-adic embedding, height, polynomial, cyclotomic field}

\begin{abstract}
In this paper, we mainly give a general explicit form of Cassels' $p$-adic embedding theorem for number fields. We also give its refined form in the case of cyclotomic fields. As a byproduct, given an irreducible polynomial $f$ over $\Z$, we give a general unconditional upper bound for the smallest prime number $p$ such that $f$ has a simple root modulo $p$.
\end{abstract}

\maketitle

%% The correct journal style for \specialsection is all uppercase; a known bug
%% in amsart.cls prevents this, so input must be uppercase until it is fixed.
%\specialsection*{This is a Special Section Head}

%%%%%%%%%%%%%%%%%%%%%%%%%%%%%%%%%%%%%%%%%%%%%%%%%%%%%%%%%%%%%%%%%%%%%%%%
%\footnote{Here is an example of a footnote. Notice that this footnote
%text is running on so that it can stand as an example of how a footnote
%with separate paragraphs should be written.
%\par
%And here is the beginning of the second paragraph.}%
%%%%%%%%%%%%%%%%%%%%%%%%%%%%%%%%%%%%%%%%%%%%%%%%%%%%%%%%%%%%%%%%%%%%%%%%

\section{Introduction}\label{introduction}

%%IS added subsections
\subsection{Motivation}
We start with recalling a result of  Cassels~\cite{Cassels1976}
that gives a $p$-adic embedding for finitely generated fields of characteristic 0,
which we reproduce here for the convenience of the reader:
\begin{theorem}\label{Cassels}
Let $K$ be a finitely generated extension of the rational field $\Q$, and let $S$ be a finite set of non-zero elements of $K$. Then, there exist infinitely many primes $p$ such that there is an embedding
\begin{equation}\label{embed}
\sigma: K\hookrightarrow \Q_p
\end{equation}
of $K$ into the field of $p$-adic numbers $\Q_p$ for which
$$
|\sigma(\beta)|_p=1, \quad \textrm{for all $\beta\in S$},
$$
where $|~|_p$ denotes the $p$-adic valuation.
\end{theorem}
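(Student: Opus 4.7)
The plan is to reduce the general statement to the number-field case, and then to combine the Chebotarev density theorem with Hensel's lemma. First, write $K=\Q(t_1,\ldots,t_r)(\theta)$ by picking a transcendence basis $t_1,\ldots,t_r$ and a primitive element $\theta$, which exists by the primitive element theorem in characteristic zero. Clearing denominators in the minimal polynomial of $\theta$ produces an irreducible $F(x;t_1,\ldots,t_r)\in\Z[x,t_1,\ldots,t_r]$ with $F(\theta;t)=0$. Each $\beta\in S$ can then be expressed as $\beta=G_\beta(\theta;t)/H_\beta(t)$ with $G_\beta\in\Z[x,t]$ and $H_\beta\in\Z[t]\setminus\{0\}$.

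Next, I would apply Hilbert's irreducibility theorem to find $a=(a_1,\ldots,a_r)\in\Z^r$ for which $F(x;a)$ stays irreducible over $\Q$, $H_\beta(a)\neq0$ for every $\beta\in S$, and $G_\beta(x;a)\not\equiv 0\pmod{F(x;a)}$ (the latter two being nonempty Zariski-open conditions on $a$). This yields a number field $L=\Q(\theta_a)$ with $\theta_a$ a root of $F(x;a)$ and nonzero elements $\beta_a:=G_\beta(\theta_a;a)/H_\beta(a)\in L^\ast$.

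Now I would choose the prime. By Chebotarev, applied to the Galois closure of $L/\Q$, there are infinitely many primes $p$ that split completely in $L$, making $F(x;a)$ split into distinct linear factors in $\Q_p[x]$. Restrict further to primes $p$ not dividing $\mathrm{disc}(F(x;a))\cdot\prod_\beta H_\beta(a)$ nor appearing in the factorization of the fractional ideal $(\beta_a)\subset\OO_L$ for any $\beta\in S$. For such $p$, Hensel's lemma supplies a root $\theta^{\ast}\in\Z_p$ of $F(x;a)$; the corresponding prime $\mathfrak{p}$ of $L$ satisfies $v_\mathfrak{p}(\beta_a)=0$ for every $\beta\in S$. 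Finally, lift to $K$ by choosing $u_i\in a_i+p^2\Z_p$ that are algebraically independent over $\Q$; such tuples exist in every ball because $\Q[t_1,\ldots,t_r]$ is countable while $\Z_p$ has uncountable transcendence degree over $\Q$. Since $F(x;u)\equiv F(x;a)\pmod{p^2}$, Hensel's lemma lifts $\theta^\ast$ to a root $\widetilde\theta\in\Z_p$ of $F(x;u)$. The assignment $t_i\mapsto u_i$, $\theta\mapsto\widetilde\theta$ defines a ring homomorphism $\sigma:K\to\Q_p$, automatically injective since $K$ is a field, and $p$-adic continuity of $G_\beta,H_\beta$ yields $|\sigma(\beta)|_p=|\beta_a|_\mathfrak{p}=1$.

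The main obstacle I anticipate is weaving together Hilbert's irreducibility theorem, Chebotarev's density theorem, and the ``$p$-adic thickening'' of the integer specialization needed to recover an embedding of $K$ (rather than just of $L$) while preserving the unit condition on $\sigma(\beta)$. A subtler point is verifying that the algebraic-independence of the $u_i$ can be arranged within a prescribed residue class modulo $p^2$, which reduces to a cardinality argument in $\Z_p^r$ against countably many polynomial hypersurfaces over $\Q$.
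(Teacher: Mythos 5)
The paper does not actually prove Theorem~\ref{Cassels}; it simply quotes the result from Cassels' 1976 paper and then proves \emph{explicit} refinements of it in the number-field case (Theorems~\ref{Cassels1} and~\ref{Cassels2}). So there is no ``paper's own proof'' to set yours against literally; the honest comparison is between your argument and the paper's effective construction in Section~\ref{embedding0}, which corresponds to the case $r=0$ of your setup.

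With that caveat, your proof is essentially correct and, as far as I can tell, follows the same strategy as Cassels' original argument: pick a transcendence basis and a primitive element, specialize the coefficients to integers to land in a number field $L$, find a good prime for $L$ via Hensel's lemma while avoiding the finitely many bad primes coming from discriminants, denominators, and the supports of the $\beta_a$'s, and then ``thicken'' the integer specialization to a $p$-adically nearby algebraically independent tuple $u\in\Z_p^r$. The key cardinality/measure observation that you flag --- that the algebraic-dependence locus is a countable union of proper subvarieties and hence has measure zero in any $p$-adic box $a+p^2\Z_p^r$, so independent tuples exist in every such box --- is exactly the point that makes the lift to all of $K$ go through, and your argument for it is sound. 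Two minor technical points you should add to your ``restrict further'' list: you must also avoid the finitely many primes dividing the leading coefficient of $F(x;a)$ (so that the roots you Hensel-lift actually lie in $\Z_p$ and so that ``splits completely in $L$'' and ``$F(x;a)$ splits into distinct linear factors mod $p$'' genuinely coincide), and, strictly speaking, primes dividing the index $[\OO_L:\Z[\theta_a]]$. Both exclusions are harmless since they are finite sets.

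The one genuine difference in route is your use of Chebotarev. For the qualitative statement that infinitely many suitable primes exist, Chebotarev is fine but overkill: one only needs \emph{one} root of $F(x;a)$ modulo $p$, not complete splitting, and the existence of infinitely many such $p$ follows from the elementary fact that $\prod_{n\le N}F(n;a)$ acquires new prime divisors as $N\to\infty$. This distinction matters for the paper's purposes: the whole point of Section~\ref{polynomial0} is to replace the analytic input (Chebotarev, or Linnik under GRH as in Bella\"iche) by an unconditional, quantitative count of prime divisors of polynomial products (Lemma~\ref{lem:div prod}), which is what makes the final bounds in Theorems~\ref{Cassels1} and~\ref{Cassels2} effective. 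Your Chebotarev-based argument would not directly yield those bounds, but it does correctly establish the qualitative Theorem~\ref{Cassels}, including the transcendental case that the paper does not treat.
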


Theorem~\ref{Cassels} is often a useful tool when one needs to employ $p$-adic techniques to solve various problems in number fields. The point
is that for many natural problems over general fields of characteristic zero, one can expect to get
 a result that is not worse than the corresponding one in the case of an algebraic number field, or even in the case of the field of rational numbers. For example, the above theorem has been used for a long time in the study of recurrence sequences over number fields; see, for example,~\cite{Everest2003,Loxton1977,Poorten1991,Poorten1992,Poorten1995}.

\subsection{Main results}
In this paper, we supplement the methods of Cassels~\cite{Cassels1976}
with several new ingredients and give an explicit version of Theorem~\ref{Cassels} in the case when $K$ is a number field. We believe these new ingredients can be of 
independent interest and may find several other applications. 

To begin with, we state the following general theorem and several subsequent corollaries.
Throughout, for an algebraic number
 $\alpha\in\overline{\Q}$, we denote by $\h(\alpha)$ its (Weil) absolute logarithmic height. For an integer $m\ge 1$, we define $\log^{+}m=\max\{1,\log m\}$, so that $\log^{+}m=\log m$ for all $m \ge 3$.

\begin{theorem}\label{Cassels1}
Let $K$ be a number field of degree $d\ge 2$ generated by $\alpha_1,\ldots,\alpha_m\in K\setminus\Q$ over $\Q$, and let $\beta_1,\ldots,\beta_n$ be some fixed non-zero elements of $K$. Then, there exists a prime number $p$ satisfying
\begin{align*}
p \le m^d\exp&\(d\sum\limits_{i=1}^{m}\h(\alpha_i)\)\\
& \left(dn\sum\limits_{i=1}^{m}\h(\alpha_i)+d\sum\limits_{i=1}^{n}\h(\beta_i)
+dn\log^{+}m\right)^{O(d^2)},
 \end{align*}
 such that~\eqref{embed} holds and
$$
|\sigma(\beta_i)|_p=1,\quad \textrm{for $1\le i\le n$}.
$$
\end{theorem}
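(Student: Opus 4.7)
The plan is to reduce the problem, in the style of Cassels~\cite{Cassels1976}, to finding a suitable prime modulo which the integer minimal polynomial of a primitive element of $K/\Q$ behaves nicely. The first step is to construct a primitive element $\theta$ of $K/\Q$ as a $\Z$-linear combination $\theta=c_1\al_1+\dots+c_m\al_m$ with integer coefficients of size polynomially bounded in $m$; a standard counting/Vandermonde argument shows this can be done in such a way that $\h(\theta)\le \sum_{i=1}^{m}\h(\al_i)+O(\log m)$. This in turn yields good upper bounds for the leading coefficient, the Mahler measure and the discriminant of the integer minimal polynomial $f(x)\in\Z[x]$ of $\theta$, which has degree exactly $d$.

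Each $\be_j$ is then expanded in the power basis $\{1,\theta,\dots,\theta^{d-1}\}$ of $K/\Q$ and written as $\be_j=g_j(\theta)/D_j$ with $g_j\in\Z[x]$ of degree less than $d$ and $D_j$ a positive integer, both of whose sizes are controlled in terms of $\h(\theta)$, $\h(\be_j)$ and $d$ by Cramer/Lagrange-interpolation estimates. Collecting all arithmetic obstructions to the construction, one forms the positive integer
\[
A=\mathrm{lc}(f)\cdot\mathrm{disc}(f)\cdot\prod_{j=1}^{n}D_j\cdot\prod_{j=1}^{n}\mathrm{Res}(f,g_j),
\]
whose logarithm is bounded via the usual Mahler-measure inequalities for resultants and discriminants. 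Any prime $p\nmid A$ is then automatically such that $f$ is separable modulo $p$, each $D_j$ is a $p$-adic unit, and $f$ and $g_j$ share no root modulo $p$ for any $j$.

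It therefore suffices to locate, within the announced range, a prime $p\nmid A$ at which $f$ has a simple root modulo~$p$. For such a prime, Hensel's lemma lifts the simple root $\bar\theta_p\in\F_p$ to a unique $p$-adic root $\theta_p\in\Z_p$ of $f$, and the assignment $\theta\mapsto\theta_p$ extends to a field embedding $\sigma:K\hookrightarrow\Q_p$. By the choice of $p$, the value $\sigma(\be_j)=g_j(\theta_p)/D_j$ is a $p$-adic unit, so $|\sigma(\be_j)|_p=1$ for each $j$, which is the required conclusion.

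The main obstacle is precisely this quantitative root-finding step: an unconditional upper bound, polynomial in $d$ and in $\h(f)+\log A$, is needed for the smallest prime $p$ at which $f$ has a simple root while $p\nmid A$. The unconditional effective Chebotarev density theorem is too weak to give the exponent $O(d^2)$ appearing in the statement, so the argument instead relies on direct elementary and analytic number-theoretic inputs (the byproduct advertised in the abstract), combined with a sieve to remove the finitely many prime divisors of $A$. Substituting the estimates from the previous two paragraphs into this root-finding bound and bookkeeping the factors $m^d$ and $\exp(d\sum_i\h(\al_i))$ coming from $M(f)$ yields the claimed inequality on~$p$.
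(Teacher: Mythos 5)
Your proposal follows the same three-stage blueprint as the paper: (i) build a primitive element $\theta$ of controlled height from the given generators (this is the paper's Theorem~\ref{primitive}), (ii) expand each $\beta_j$ in the power basis of $\theta$ with controlled denominators and coefficients (the paper's Theorem~\ref{coefficient0}), and (iii) find a small prime $p$ avoiding a prescribed integer at which $f$ has a simple root, via the unconditional elementary bound of Theorem~\ref{polynomial}. The one genuine variant is in how you ensure $|\sigma(\beta_j)|_p = 1$ rather than merely $\le 1$: you place the resultants $\mathrm{Res}(f,g_j)$ (and $\mathrm{lc}(f)$) into the excluded modulus $A$, so that $g_j(\theta_p)$ is automatically a $p$-adic unit, whereas the paper instead adjoins the inverses $\beta_j^{-1}$ to the list and shows $|\sigma(\beta_j)|_p \le 1$ and $|\sigma(\beta_j^{-1})|_p \le 1$ simultaneously. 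Both routes work and produce bounds of the same shape up to the absolute constant hidden in the exponent $O(d^2)$; the resultant route slightly inflates $\log Q$ by an extra factor of $d$ compared with the paper's $\log b_j + \log b_{n+j}$, but this is absorbed harmlessly. One small imprecision: you describe the root-finding bound as ``polynomial in $d$,'' but the actual bound is exponential in $d$ (with exponents $d$ and $O(d^2)$); the accurate statement is that it is polynomial in $H(f)$ and $\log A$ with exponent $O(d^2)$, which is exactly what the theorem's $(\cdots)^{O(d^2)}$ form requires.
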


\begin{corollary}\label{Cassels11}
Let $K$ be a number field of degree $d\ge 2$ generated by $\alpha_1,\ldots,\alpha_m\in K\setminus\Q$ over $\Q$. Then, there exists a prime number $p$ satisfying
$$
%%IS no need for m^d
p\le %% m^d
\exp\(d\sum\limits_{i=1}^{m}\h(\alpha_i)\)\left(dm\sum\limits_{i=1}^{m}\h(\alpha_i)+dm\right)^{O(d^2)},
$$
 such that~\eqref{embed} holds and
$$
|\sigma(\alpha_i)|_p=1,\quad \textrm{for $1\le i\le m$}.
$$
\end{corollary}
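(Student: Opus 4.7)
The plan is to deduce Corollary~\ref{Cassels11} from Theorem~\ref{Cassels1} as a direct specialization, taking $n = m$ and $\beta_i = \alpha_i$ for every $i$, and then clean up the resulting bound using elementary inequalities. Since each $\alpha_i$ lies in $K \setminus \Q$, they are in particular non-zero, so the hypothesis of Theorem~\ref{Cassels1} is satisfied.

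Setting $H := \sum_{i=1}^{m} \h(\alpha_i)$, Theorem~\ref{Cassels1} (with the above choice of $\beta_i$) produces a prime $p$ and an embedding $\sigma:K\hookrightarrow\Q_p$ with $|\sigma(\alpha_i)|_p = 1$ for $1 \leq i \leq m$, satisfying
$$
p \leq m^d \exp(dH) \bigl( dm H + dH + dm \log^{+} m \bigr)^{O(d^2)}.
$$
The remaining task is to show that this is majorised by the form stated in the corollary. I would first simplify the bracketed quantity via $m+1 \leq 2m$ and $\log^{+} m \leq m$, giving
$$
dmH + dH + dm \log^{+} m \;\leq\; 2dmH + dm^2 \;\leq\; 2m\bigl(dmH + dm\bigr).
$$
Raising to the $O(d^2)$ power produces a contribution of the form $m^{O(d^2)}\bigl(dmH + dm\bigr)^{O(d^2)}$. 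The leading factor $m^d$ from Theorem~\ref{Cassels1} is then absorbed together with $m^{O(d^2)}$ using $m^d \leq m^{d^2}$ and $m^{O(d^2)} \leq (dm)^{O(d^2)} \leq (dmH + dm)^{O(d^2)}$, which yields the stated bound.

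Since the corollary is essentially an application of Theorem~\ref{Cassels1} for a convenient choice of parameters, I do not anticipate any genuine obstacle beyond the theorem itself; the work is confined to the routine bookkeeping sketched above.
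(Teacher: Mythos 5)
Your proposal is correct and follows the same route as the paper, which dispatches Corollary~\ref{Cassels11} with a one-line remark that it ``follows directly from Theorem~\ref{Cassels1}.'' You simply make explicit the bookkeeping (specializing $n=m$, $\beta_i=\alpha_i$, and absorbing $m^d$ and the $\log^{+}m$ term into the $O(d^2)$ exponent using $\log^{+}m\le m$ and $2\le d$), and each of those elementary estimates is valid.
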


\begin{corollary}\label{Cassels12}
Let $K$ be a number field of degree $d\ge 2$ generated by an algebraic integer $\alpha$ over $\Q$, and let $\beta_1,\ldots,\beta_n\in\Z[\alpha]$ be some fixed non-zero algebraic integers (respectively, units) of $K$. Then, there exists a prime number $p$ satisfying
$$
p\le \exp(d\h(\alpha))\left(d\h(\alpha)+d\right)^{O(d^2)},
$$
 such that~\eqref{embed} holds and
$$
|\sigma(\beta_i)|_p\le1~\textit{(respectively, $|\sigma(\beta_i)|_p= 1$)},\quad \textrm{for $1\le i\le n$}.
$$
\end{corollary}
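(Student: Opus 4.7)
My plan is to reduce the corollary directly to Theorem~\ref{Cassels1}. The key point I want to exploit is that for $\beta_i$ in the ring of integers $\OO_K$, the conditions $|\sigma(\beta_i)|_p \le 1$ (and $=1$ in the unit case) come for free from integrality and therefore need not be enforced through Theorem~\ref{Cassels1}. This is precisely why the bound in the corollary carries no dependence on $n$ or on the heights of the $\beta_i$.

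Concretely, I would apply Theorem~\ref{Cassels1} with the single generator $\alpha_1 = \alpha$ (so $m = 1$) and with the one auxiliary element $\beta_1 = \alpha$, included only to ensure that the base of the $O(d^2)$-power is non-trivial. Since $\h(\alpha_1) = \h(\beta_1) = \h(\alpha)$ and $\log^{+}m = 1$, the bound of Theorem~\ref{Cassels1} collapses to the shape $\exp(d\h(\alpha))(d\h(\alpha)+d)^{O(d^2)}$ asserted in the corollary, and it produces an embedding $\sigma \colon K \hookrightarrow \Q_p$ for some prime $p$ satisfying this bound.

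To finish, I would invoke the standard fact that any embedding $\sigma$ of a number field $K$ into $\Q_p$ maps $\OO_K$ into $\Z_p$: if $\gamma \in \OO_K$ satisfies a monic polynomial relation over $\Z$, then $\sigma(\gamma)$ satisfies the same relation and so is integral over $\Z_p$; since $\Z_p$ is its own integral closure in $\Q_p$, this forces $\sigma(\gamma) \in \Z_p$. Applied to each $\beta_i \in \Z[\alpha] \subset \OO_K$, this yields $|\sigma(\beta_i)|_p \le 1$ and handles the algebraic integer case. In the unit case, I would additionally apply the same fact to $\beta_i^{-1} \in \OO_K$, obtaining $|\sigma(\beta_i^{-1})|_p \le 1$; combined with the previous inequality this forces $|\sigma(\beta_i)|_p = 1$.

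There is no serious obstacle here, and in particular no new Diophantine or analytic input is required beyond Theorem~\ref{Cassels1}. The only conceptual step is recognising that because the $\beta_i$ are already integral over $\Z$, their valuation conditions at $p$ impose no additional constraint on the prime $p$ one chooses, so one can invoke Theorem~\ref{Cassels1} with the minimal data $m = n = 1$ and still satisfy all the requested $p$-adic bounds.
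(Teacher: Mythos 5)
Your proof is correct, and it reaches the same bound, but it takes a slightly different route from the paper's. The paper's proof bypasses Theorem~\ref{Cassels1} entirely: it applies Corollary~\ref{single root} directly to the minimal polynomial $f$ of $\alpha$ to get a prime $p\le H(f)\,(d\log^{+}H(f))^{O(d^2)}$ with a simple root mod $p$, constructs $\sigma$ by Hensel lifting, notes that because $\beta_i\in\Z[\alpha]$ the denominators $b_i$ appearing in Condition~{\bf C} of Section~\ref{arbitrary} are all $1$ (so no extra constraint on $p$ is needed), and finally invokes~\eqref{trecia} to rewrite $H(f)$ in terms of $\h(\alpha)$. You instead invoke Theorem~\ref{Cassels1} as a black box with the minimal data $m=n=1$, $\alpha_1=\beta_1=\alpha$, and then appeal to the general fact that any field embedding $K\hookrightarrow\Q_p$ sends $\OO_K$ into $\Z_p$ by integral closedness. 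Both approaches are valid; your integrality argument is in fact a bit more robust in the unit case, since it only needs $\beta_i^{-1}\in\OO_K$ (automatic for units of $\OO_K$) rather than $\beta_i^{-1}\in\Z[\alpha]$, which the paper's ``$b_{n+i}=1$'' claim implicitly requires. One small sanity check that you did gloss over but which does go through: plugging $m=n=1$ into Theorem~\ref{Cassels1} gives $(2d\h(\alpha)+d)^{O(d^2)}$ rather than $(d\h(\alpha)+d)^{O(d^2)}$, but the factor $2^{O(d^2)}$ is absorbed since $d\h(\alpha)+d\ge 2$.
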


The above results depend on the generators we choose for $K$ over $\Q$. In contrast, the following bound is independent of the choice of generators, but involves the discriminant of $K$.

\begin{corollary}\label{Cassels13}
Let $K$ be a number field of degree $d\ge 2$ with discriminant $D_K$, and let $\beta_1,\ldots,\beta_n$ be some fixed non-zero elements of $K$. Furthermore, suppose that $K$ has at least one real embedding. Then, there exists a prime number $p$ satisfying
$$
p\le\sqrt{|D_K|} \left(n\log |D_K|+d\sum\limits_{i=1}^{n}\h(\beta_i)\right)^{O(d^2)},
$$
 such that~\eqref{embed} holds and
$$
|\sigma(\beta_i)|_p=1,\quad \textrm{for $1\le i\le n$}.
$$
\end{corollary}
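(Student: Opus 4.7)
The natural strategy is to reduce Corollary~\ref{Cassels13} to Theorem~\ref{Cassels1} by choosing a single, small generator of $K$ over $\Q$, i.e.\ by taking $m=1$. The real embedding hypothesis is exactly the condition that guarantees the existence of a primitive element $\alpha\in\OO_K$ of $K$ whose Mahler measure is controlled by $\sqrt{|D_K|}$. More precisely, I would invoke the classical bound (due, in this sharp form, to Mahler and Ruppert) stating that if $K$ has at least one real place, then there exists an algebraic integer $\alpha\in\OO_K$ with $K=\Q(\alpha)$ and $M(\alpha)\le\sqrt{|D_K|}$; equivalently, $d\,\h(\alpha)=\log M(\alpha)\le\tfrac12\log|D_K|$, so that $\exp(d\,\h(\alpha))\le\sqrt{|D_K|}$.

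Armed with such an $\alpha$, I would apply Theorem~\ref{Cassels1} with $m=1$ and $\alpha_1=\alpha$. The leading factor becomes $m^d\exp(d\,\h(\alpha))=\exp(d\,\h(\alpha))\le\sqrt{|D_K|}$, and since $\log^{+}m=\log^{+}1=1$, the inner quantity raised to the $O(d^2)$ power equals
\[
dn\,\h(\alpha)+d\sum_{i=1}^{n}\h(\beta_i)+dn
\;\le\;\tfrac{n}{2}\log|D_K|+d\sum_{i=1}^{n}\h(\beta_i)+dn.
\]
This is essentially the bound claimed in Corollary~\ref{Cassels13}, up to the nuisance term $dn$.

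The last step is to absorb this $dn$ into the expression $n\log|D_K|+d\sum\h(\beta_i)$. This is where I would use a lower bound on $|D_K|$: Minkowski's discriminant bound yields $|D_K|\ge 3$ for every number field of degree $d\ge 2$, so $\log|D_K|\ge\log 3>0$ and hence $dn\le C\,d\cdot n\log|D_K|$ for an absolute constant $C$. The resulting factor of $d$ contributes $d^{O(d^2)}=\exp(O(d^2\log d))$, which is swallowed by raising to the $O(d^2)$ power (or, alternatively, by the slack in the implicit constant in $O(d^2)$ since $\log|D_K|\gg d\log d$ by a sharper Minkowski estimate). Combining these gives the stated bound.

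\textbf{Main obstacle.} The crux is producing the generator $\alpha$ with $M(\alpha)\le\sqrt{|D_K|}$; everything else is bookkeeping. Without the real embedding assumption, the analogous geometry-of-numbers argument typically yields only $M(\alpha)\ll_d |D_K|^{(d-1)/2}$ or similar, which would wreck the $\sqrt{|D_K|}$ factor in the final bound. Thus the role of the real place is precisely to supply a one-dimensional real ``direction'' in the Minkowski lattice of $\OO_K$ along which a short primitive integer can be chosen; I would either cite this result directly or reproduce the short geometry-of-numbers proof if needed.
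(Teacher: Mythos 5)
Your proposal takes essentially the same route as the paper: produce a primitive element $\alpha$ with $d\,\h(\alpha)\le\tfrac12\log|D_K|$ (the paper cites Vaaler--Widmer~\cite{Vaaler2013}, Theorem~1.2, for this — the result you want is theirs, not Mahler--Ruppert, though Ruppert proved the $d=2$ case and conjectured the rest), substitute into the bound of Theorem~\ref{Cassels1} with $m=1$ (equivalently, into the intermediate estimate~\eqref{p1}), and then absorb the nuisance $dn$ term using a lower bound on $\log|D_K|$ of the shape $\gg d$ (the paper uses Odlyzko's $|D_K|\ge 7.25^d$ for $d\ge 16$; your remark about $\log|D_K|\gg d\log d$ overstates what Minkowski-type bounds give, but $\gg d$ is both true and sufficient). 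The argument is correct and matches the paper's proof.
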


For a prime number $\ell$ and an integer $m$,  we write, as usual, $\ell^e\|m$,
if $e$ is the largest integer with $\ell^e\mid m$.

 Given an integer $m\ge 2$, suppose
that $\ell= P(m)$, where $P(m)$ denotes
the largest prime divisor of $m$ and $\ell^e\|m$. Define
$$
\delta(m)=\left\{ \begin{array}{ll}
                \varphi(m/\ell^e)  &  \textrm{if $\ell\equiv 1 \pmod {m/\ell^{e}}$},\\
                1 & \textrm{otherwise},
                 \end{array} \right.
$$
where $\varphi$ is Euler's totient function. In particular, $\delta(m)=1$ if $m$ is a power of a prime or $m \ge \ell^{e+1}$.

For cyclotomic fields, we can get a refined explicit form of Theorem \ref{Cassels}.
\begin{theorem}\label{Cassels2}
Let $K$ be the $m$-th cyclotomic field with $m> 2$, and let $\beta_1,\ldots,\beta_n$ be some fixed non-zero elements of $K$. Then, there exists a prime number $p$ satisfying
$$
p\le \left(d\sum\limits_{i=1}^{n}\h(\beta_i)+dn\right)^{O(d\delta(m))},
$$
where $d=\varphi(m)$, such that~\eqref{embed} holds and
$$
|\sigma(\beta_i)|_p=1,\quad \textrm{for $1\le i\le n$}.
$$
\end{theorem}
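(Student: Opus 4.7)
The strategy is to specialize the approach of Theorem~\ref{Cassels1} to the natural generator $\zeta=\zeta_m$ of $K=\Q(\zeta_m)$, whose minimal polynomial is the cyclotomic polynomial $\Phi_m(x)$ of degree $d=\varphi(m)$; the refined exponent will come from exploiting the explicit factorization of $\Phi_m$ modulo a prime. An embedding $\sigma:K\hookrightarrow\Q_p$ exists if and only if $\Phi_m$ has a simple root in $\Z_p$, which by Hensel's lemma reduces to locating a simple linear factor of $\Phi_m$ modulo~$p$.

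For $p\nmid m$, the reduction $\Phi_m(x)\bmod p$ splits into $d/f$ distinct irreducible factors of equal degree $f=\mathrm{ord}_m(p)$, so a simple linear factor exists exactly when $p\equiv 1\pmod m$. For $p=\ell\mid m$ with $\ell^e\|m$, the standard identity $\Phi_m(x)\equiv \Phi_{m/\ell^e}(x)^{\varphi(\ell^e)}\pmod\ell$ shows that (non-simple) linear factors of $\Phi_m\bmod\ell$ appear exactly when $\ell\equiv 1\pmod{m/\ell^e}$; this is precisely the dichotomy captured by the definition of $\delta(m)$. Although Hensel does not yield an embedding via $\ell$ when $\varphi(\ell^e)>1$, the presence of these pseudo-roots will influence the subsequent prime count.

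Next I would translate the constraints on the $\beta_i$ into avoidance of a set of bad primes. Writing each $\beta_i$ as $\gamma_i/c_i$ with $\gamma_i\in\OO_K$ and $c_i\in\Z_{>0}$, the equality $|\sigma(\beta_i)|_p=1$ holds automatically provided that $p\equiv 1\pmod m$ and $p$ divides neither $c_i$ nor the rational integer $N_{K/\Q}(\gamma_i)$. Standard height bounds give $\sum_i\bigl(\log c_i+\log|N_{K/\Q}(\gamma_i)|\bigr)=O(d\sum_i\h(\beta_i)+n)$, so the bad-prime set has bounded total logarithmic size. To finish, I would produce a prime $p\equiv 1\pmod m$ outside this set via a Chebyshev-type comparison, choosing $X$ large enough that $\log\prod_{p\le X,\,p\equiv 1\pmod m}p$ exceeds the logarithm of the product of all bad primes.

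The main obstacle, and the source of the factor $\delta(m)$ in the exponent, is obtaining a usable explicit lower bound for $\log\prod_{p\le X,\,p\equiv 1\pmod m}p$ in the relevant range of $X$. In the easy case $\ell\not\equiv 1\pmod{m/\ell^e}$, effective estimates of Dirichlet--Mertens type in arithmetic progressions deliver a bound essentially of the form $cX/d$, yielding the exponent $O(d)$. In the delicate case $\ell\equiv 1\pmod{m/\ell^e}$, the prime $\ell$ contributes the spurious linear factors of $\Phi_m\bmod\ell$ described above without providing a genuine embedding, effectively diluting the density of usable primes by a factor of $\varphi(m/\ell^e)$; this inflates the required $X$ and produces the exponent $O(d\delta(m))=O(d\varphi(m/\ell^e))$. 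Carrying out this case analysis rigorously, together with the height-versus-prime-product comparison, should then deliver the stated bound.
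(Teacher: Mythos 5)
Your proposal identifies the right starting point (take $\alpha=\zeta_m$, so that $\h(\alpha)=0$ and the minimal polynomial is $\Phi_m$, then seek a prime $p$ for which $\Phi_m$ has a simple linear factor modulo $p$ while avoiding the denominators of the $\beta_i$), but it has a genuine gap in the key analytic step. Your plan to produce the prime by a Chebyshev-type comparison using ``effective estimates of Dirichlet--Mertens type in arithmetic progressions'' — that is, an unconditional lower bound on $\log\prod_{p\le X,\ p\equiv 1\ (\mathrm{mod}\ m)}p$ of the form $cX/d$ in a range of $X$ that is only polynomial in the data — simply is not available unconditionally. Siegel--Walfisz-type estimates are effective only for $X$ enormously large compared to $m$ (because of possible Siegel zeros), and Linnik's theorem gives a single prime $\equiv 1\pmod m$ below $m^{O(1)}$, not the positive-density count your comparison needs. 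Thus the sentence ``effective estimates \dots deliver a bound essentially of the form $cX/d$, yielding the exponent $O(d)$'' does not go through; pursued honestly this route would yield a much larger, or conditional, exponent.

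The paper avoids this entirely by specializing its elementary product argument (Lemmas~\ref{product} and~\ref{lem:div prod}) to $\Phi_m$. It bounds $r_\ell(L)$, the $\ell$-adic valuation of $W(L)=\prod_{j\le L}\Phi_m(j)$, using the refined counts $N(\ell^k)$ for cyclotomic polynomials: for $\ell\nmid m$ one has $N(\ell^k)\le d$ if $\ell\equiv 1\pmod m$ and $N(\ell^k)=0$ otherwise, while for $\ell\mid m$ the identity $\Phi_m\equiv\Phi_{m/\ell^e}^{\varphi(\ell^e)}\pmod\ell$ gives $N(\ell)\le\varphi(m/\ell^e)$ only when $\ell=P(m)$ and $\ell\equiv 1\pmod{m/\ell^e}$. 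Combining these yields $r_\ell(L)\le 4dL\,\delta(m)\ell^{-1}+dK_\ell(L)$, and pushing this through the proof of Theorem~\ref{polynomial} gives $t\ge c_1L/(d\log^+H)$ or $t\ge L^{c_2/\delta(m)}$, from which $O(d\delta(m))$ emerges. So the role of $\delta(m)$ is not that the bad prime $\ell$ ``dilutes the density of usable primes'' as you suggest; rather, when $\ell=P(m)$ satisfies the congruence, that single prime absorbs a $\varphi(m/\ell^e)$-times larger share of $W(L)$, weakening the lower bound on $\omega(W(L))$. Replacing your Chebyshev step with this prime-divisor count of $W(L)$, specialized to $\Phi_m$ via the refined $N(\ell^k)$ estimates, is what you would need to make the argument unconditional.
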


\subsection{Approach}
To prove Theorems~\ref{Cassels1} and~\ref{Cassels2} we, roughly speaking, follow the original proof of Cassels and make each step there explicit. For our purpose, we need to tackle the following three subproblems which appear to be new and  which contain the main techniques in this paper.
%%IS added
We believe that these problems and our contribution to them
can be of independent interest.

Firstly, given generators $\alpha_1,\ldots,\alpha_m$ of $K$ over $\Q$,
we need to construct a primitive element $\alpha$ of $K$ such that $\h(\alpha)$ can be bounded explicitly in terms of heights $\h(\alpha_i)$, $1 \le i \le m$, and $[K:\Q]$. Actually, in Section~\ref{primitive0},
we  study this problem much more than what we need in our particular application.

Secondly, given a primitive element $\alpha$ of $K$ and an arbitrary element $\beta$, $\beta$ can be expressed uniquely as a linear combination of the basis $\{1,\alpha,\ldots,\alpha^{d-1}\}$. We need to bound the heights of the coefficients explicitly. This is handled in Section~\ref{coefficient00}.

Thirdly, given an arbitrary irreducible polynomial $f$ over $\Z$, we need to derive an upper bound for the smallest prime $p$ such that $f$ has a simple root modulo $p$. We study this problem extensively by using elementary arguments in Section~\ref{polynomial0}.

Now, we give a brief outline of the proof of Theorem~\ref{Cassels1}. We first construct a primitive element $\alpha$ of $K$ with bounded height from the given generators $\alpha_1,\ldots,\alpha_m$. Let $f$ be the minimal polynomial of $\alpha$ over $\Z$. Put $\beta_{n+i}=\beta_i^{-1}$ for $1\le i\le n$. Then, for $1\le i\le 2n$, we express $\beta_i$ as a linear combination of the basis $\{1,\alpha,\ldots,\alpha^{d-1}\}$ such that all the coefficients are in reduced form, and denote by $b_i$ the least common multiple of the denominators of the coefficients. Note that a prime $p$ is
%%IS what we need
suitable
 if it satisfies the following two conditions:
\begin{itemize}
\item $f$ has a simple root modulo $p$.

\item $p$ does not divide any $b_i$, $1\le i\le 2n$.
\end{itemize}
Using the results and techniques developed in solving
the above three subproblems, we
%%%IS can expect to
derive an upper bound for the  smallest  such prime $p$;
see Section~\ref{embedding0} for more details.

Throughout the paper, we use the Landau symbols $O$ and $o$. Recall that the assertion $U=O(V)$ is equivalent to the inequality $|U|\le cV$ with some constant $c$, while $U=o(V)$ means that $U/V\to 0$.

\section{``Height'' of a number field}
\label{primitive0}

\subsection{Definitions and main results}

Let $K$ be a number field generated by $\alpha_1, \alpha_2, \ldots, \alpha_m$ over $\Q$. In this section, we show the existence of  a primitive element $\alpha$ of $K$
of small height. We present more general versions than we actually need for our purpose.

Given a polynomial $f(x)=a_dx^d+\cdots+a_0=a_d (x-\al_1)\cdots (x-\al_d) \in \C[x]$, where $a_d \ne 0$, its {\it height} is defined by $H(f)=\max_{0 \leq i \leq d} |a_i|$,
and its {\it Mahler measure} by
$$
M(f)=|a_d| \prod_{i=1}^d \max\{1,|\al_i|\}.
$$
For each $f \in \C[x]$ of degree $d$, these quantities are related by the following inequality
\begin{equation}\label{trecia}
H(f) 2^{-d} \leq M(f) \leq H(f) \sqrt{d+1}.
\end{equation}

The left inequality of~\eqref{trecia} follows from the identity
$$
a_{d-i}=(-1)^{i} a_d \sum_{1\leq j_1<\cdots <j_i \leq d} \al_{j_1}\cdots \al_{j_i},
$$
since each product $|a_d \al_{j_1}\cdots \al_{j_i}|$ does not exceed $M(f)$
(see, for example,~\cite[Lemma~3.11]{wa}). The right inequality of~\eqref{trecia} follows from the so-called Landau's inequality $M(f) \leq \sqrt{\sum_{i=0}^d |a_i|^2}$ which was proved, for instance, in~\cite{ca}, \cite{la} and~\cite{sp}.

For an algebraic number $\al \in \overline{\Q}$ of degree $d$, its Mahler measure $M(\al)$ is the Mahler measure of its minimal polynomial $f$ over $\Z$, that is, $M(\al)=M(f)$. Then, the \emph{(Weil) absolute logarithmic height} $\h(\al)$ of $\al$ is equal to $d^{-1} \log M(\al)$. We also define the usual \emph{height} $H(\al)$ of $\al$ as the height of $f$, namely, $H(\al)=H(f)$.

\begin{theorem}\label{primitive}
Let $\al_1,\ldots,\al_m$ be some algebraic numbers of degree $d_1,\ldots,d_m \geq 2$, respectively, and let
$K=\Q(\al_1,\ldots,\al_m)$ be
of degree $d$ over $\Q$. Then, $K$ contains an algebraic number $\al$
satisfying $K=\Q(\al)$ and such that
$$
\h(\al)\leq  \log (m \lfloor d/2 \rfloor)+\h(\al_1)+\cdots+\h(\al_m).
$$
\end{theorem}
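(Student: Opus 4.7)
The plan is to construct $\alpha$ as an integer linear combination $\alpha = c_1\alpha_1 + \cdots + c_m\alpha_m$ with small integer coefficients $c_i$. The starting point is the general height inequality
\[
\h\!\left(\sum_{i=1}^m c_i\alpha_i\right) \leq \log\!\left(\sum_{i=1}^m|c_i|\right) + \sum_{i=1}^m \h(\alpha_i),
\]
which I would derive by a place-by-place analysis: at an archimedean place $v$ the triangle inequality gives $|\sum c_i\alpha_i|_v \leq (\sum|c_i|)\max_i|\alpha_i|_v$, while at a non-archimedean $v$ the ultrametric inequality combined with $|c_i|_v \leq 1$ (valid since $c_i \in \Z$) yields $|\sum c_i\alpha_i|_v \leq \max_i|\alpha_i|_v$. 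Weighting by the local degrees $[K_v:\Q_v]/d$, summing over all $v$, and using $\sum_{v\mid\infty}[K_v:\Q_v]/d = 1$ produces the displayed inequality. Hence it suffices to exhibit integers $c_1, \ldots, c_m$ with $\sum|c_i| \leq m\lfloor d/2\rfloor$ (for example, with each $|c_i| \leq \lfloor d/2\rfloor$) such that $\alpha$ generates $K$.

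Primitivity of $\alpha$ reduces to a linear-forms avoidance problem. Letting $\sigma_1, \ldots, \sigma_d$ be the $\Q$-embeddings of $K$ into $\overline{\Q}$, the element $\alpha = \sum c_i\alpha_i$ has $d$ distinct conjugates (equivalently, generates $K$) precisely when
\[
L_{jk}(c) := \sum_{i=1}^m c_i\bigl(\sigma_j(\alpha_i) - \sigma_k(\alpha_i)\bigr) \neq 0 \quad \text{for all } j \neq k.
\]
The assumption $K = \Q(\alpha_1, \ldots, \alpha_m)$ ensures that every $L_{jk}$ is a nonzero linear form in $x_1, \ldots, x_m$, so the problem is to locate an integer point of the cube $\{-\lfloor d/2\rfloor, \ldots, \lfloor d/2\rfloor\}^m$ lying off the union of the $\binom{d}{2}$ hyperplanes $L_{jk} = 0$.

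The main obstacle is achieving this with the sharp bound $\lfloor d/2\rfloor$, which is linear rather than quadratic in $d$: a naive Schwartz--Zippel union bound applied to the product $\prod_{j<k}L_{jk}$ only produces lattice points with $|c_i| \lesssim \binom{d}{2}$. I would overcome this by inducting on $m$. The base case $m = 1$ is immediate from $\alpha = \alpha_1$. For the inductive step, the inductive hypothesis provides a primitive element $\beta$ of $\Q(\alpha_1, \ldots, \alpha_{m-1})$ of suitably bounded height, reducing matters to the two-generator setting $K = \Q(\beta, \alpha_m)$; there I would find $c \in \Z$ with $|c| \leq \lfloor d/2\rfloor$ such that $\alpha = \beta + c\alpha_m$ is primitive, by a careful case analysis on pairs of embeddings $\sigma \neq \tau$ of $K$ according to whether $\sigma(\beta) = \tau(\beta)$, $\sigma(\alpha_m) = \tau(\alpha_m)$, or neither, and using that only the ``neither'' case contributes a true integer obstruction to $c$. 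Feeding the resulting coefficients $c_i$ into the height inequality from the first paragraph then yields $\h(\alpha) \leq \log(m\lfloor d/2\rfloor) + \sum \h(\alpha_i)$.
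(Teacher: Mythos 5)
Your overall architecture matches the paper's: you use the same sample set $S = \{-\lfloor d/2\rfloor,\ldots,\lfloor d/2\rfloor\}$ for the coefficients, the same height inequality $\h(\sum_i c_i\alpha_i)\le\log(\sum_i|c_i|)+\sum_i\h(\alpha_i)$ (which is the linear case of the paper's Lemma~\ref{antra}, and your place-by-place derivation of it is fine), and the same target of producing a primitive element inside the cube $S^m$. Where you diverge is the primitivity count: the paper simply cites Lemma~\ref{pirma} (Brawley--Gao), which guarantees at least $|S|^{m-1}(|S|-d+1)$ primitive tuples in $S^m$, and since $|S|\ge d$ this is positive. You instead propose to re-prove the existence by induction on $m$, which is a reasonable alternative route.

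However, your sketch of the inductive step has a genuine gap exactly at the point you flag as the crux. You say you will find $c$ with $|c|\le\lfloor d/2\rfloor$ such that $\beta+c\alpha_m$ is primitive by a ``case analysis on pairs of embeddings $\sigma\neq\tau$'' and that ``only the `neither' case contributes a true integer obstruction.'' But the pairs falling into the ``neither'' case (both $\sigma(\beta)\neq\tau(\beta)$ and $\sigma(\alpha_m)\neq\tau(\alpha_m)$) can number as many as $\binom{d}{2}$, and each one rules out one value of $c$. That is the quadratic count you explicitly said you wanted to avoid, and nothing in the proposed case analysis reduces it to $d-1$; you never have more than $2\lfloor d/2\rfloor+1\ge d$ candidate values of $c$, so a $\binom{d}{2}$-size exclusion set is fatal once $d\ge 4$. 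The missing ingredient is this: if $\alpha=\beta+c\alpha_m$ is \emph{not} primitive, then $[K:\Q(\alpha)]\ge 2$, and since every fiber of the restriction map from the $d$ embeddings of $K$ to the embeddings of $\Q(\alpha)$ has exactly $[K:\Q(\alpha)]$ elements, the fiber containing the identity embedding has a second element; that is, there is some $\tau\neq\mathrm{id}$ with $\tau(\alpha)=\alpha$. So it suffices to forbid $\tau(\beta)+c\,\tau(\alpha_m)=\beta+c\alpha_m$ for the $d-1$ non-identity embeddings $\tau$, each of which (when $\tau(\alpha_m)\neq\alpha_m$) excludes exactly one $c$; when $\tau(\alpha_m)=\alpha_m$ one must have $\tau(\beta)\neq\beta$, so no $c$ is excluded. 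This gives at most $d-1$ bad values, leaving at least one admissible $c$ in $S$. (Equivalently, and without induction at all, the bad tuples $(c_1,\ldots,c_m)$ lie in the zero set of $\prod_{j=2}^d(\ell_1-\ell_j)$, where $\ell_j=\sum_i\sigma_j(\alpha_i)x_i$, a nonzero polynomial of degree $d-1$, and Schwartz--Zippel over $S^m$ gives the Brawley--Gao count directly.) Without this observation your induction does not close.
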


Equivalently, the bound of Theorem~\ref{primitive} can be written as
$$
M(\al) \leq (m \lfloor d/2 \rfloor)^d \prod_{i=1}^m M(\al_i)^{d/d_i}.
$$

\begin{corollary}\label{pag0}
Let $\al_1,\ldots,\al_m$ be some algebraic numbers of degree $d_1,\ldots,d_m \geq 2$ and usual height $H_1,\ldots,H_m$, respectively, and let
$K=\Q(\al_1,\ldots,\al_m)$ be of degree $d$ over $\Q$. Then, $K$ contains an algebraic number $\al$
satisfying $K=\Q(\al)$ and
$$
H(\al) \leq (md)^d \prod_{i=1}^m (d_i+1)^{d/(2d_i)}  \prod_{i=1}^m H_i^{d/d_i}.
$$
\end{corollary}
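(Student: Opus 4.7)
The plan is to deduce Corollary~\ref{pag0} directly from Theorem~\ref{primitive} by converting Mahler measures to naive heights using the two-sided inequality~\eqref{trecia}.

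First, I would invoke Theorem~\ref{primitive} (in its equivalent Mahler-measure form displayed immediately after its statement) to obtain a primitive element $\al$ of $K$ with
$$
M(\al) \leq (m \lfloor d/2 \rfloor)^d \prod_{i=1}^m M(\al_i)^{d/d_i}.
$$
Then I would apply the left half of~\eqref{trecia} to the minimal polynomial of $\al$, which has degree $d$, to get $H(\al) \leq 2^d M(\al)$. For each $\al_i$, whose minimal polynomial has degree $d_i$, the right half of~\eqref{trecia} yields $M(\al_i) \leq H_i \sqrt{d_i+1} = H_i (d_i+1)^{1/2}$.

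Substituting these into the Mahler measure bound gives
$$
H(\al) \leq 2^d (m\lfloor d/2 \rfloor)^d \prod_{i=1}^m \bigl(H_i (d_i+1)^{1/2}\bigr)^{d/d_i}
= (2m\lfloor d/2 \rfloor)^d \prod_{i=1}^m (d_i+1)^{d/(2d_i)} \prod_{i=1}^m H_i^{d/d_i}.
$$
Since $2\lfloor d/2\rfloor \leq d$, we have $(2m\lfloor d/2\rfloor)^d \leq (md)^d$, which yields exactly the claimed inequality.

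There is no real obstacle here: the corollary is essentially just a bookkeeping translation of Theorem~\ref{primitive} between the two standard size measures of a polynomial, so the only care needed is in tracking the powers of $2$ and the factors $\sqrt{d_i+1}$ coming from the two sides of~\eqref{trecia} and absorbing the constant $2^d$ into the $(md)^d$ factor.
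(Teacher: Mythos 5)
Your proof is correct and follows exactly the same route as the paper's: invoke Theorem~\ref{primitive} in its Mahler measure form, convert $M(\al)$ to $H(\al)$ and each $H_i$ to $M(\al_i)$ via the two sides of~\eqref{trecia}, and absorb the factor $2^d$ using $2\lfloor d/2\rfloor\le d$. Nothing to add.
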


\begin{corollary}\label{pag}
Let $f \in \Z[x]$ be a polynomial of degree $d$ with height $H$
whose splitting field $K$ is of degree $D$ over $\Q$. Then, for some algebraic
number $\al$ satisfying $K=\Q(\al)$, we have
$$
\h(\al)\le \log((d-1) \lfloor D/2 \rfloor)+\frac{d-1}{d}\log (H\sqrt{d+1}),
$$
and
$$
H(\al) \leq (d-1)^{D} D^D (d+1)^{(d-1)D/(2d)} H^{(d-1)D/d}.
$$
\end{corollary}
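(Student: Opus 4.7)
My plan is to reduce Corollary~\ref{pag} to Theorem~\ref{primitive} by choosing the right generating set for the splitting field $K$. Let the distinct roots of $f$ be $\gamma_1,\ldots,\gamma_r$, with multiplicities $\mu_1,\ldots,\mu_r$ in $f$, and suppose $\gamma_1,\ldots,\gamma_s$ are the irrational ones; since irrational algebraic numbers come in Galois conjugate sets of size $\ge 2$, we have $s\ge 2$ (the case $D=1$ being trivial). The rational roots lie in $\Q$, so $K=\Q(\gamma_1,\ldots,\gamma_s)$. Moreover, the Vieta relation $\sum_{j=1}^{r}\mu_j\gamma_j=-a_{d-1}/a_d\in\Q$ (with $a_d,a_{d-1}$ the two leading coefficients of $f$) lets us solve for any single $\gamma_{j_0}$, $1\le j_0\le s$, in terms of the remaining $\gamma_j$ together with rational data. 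Hence $K$ is generated by the $s-1\le d-1$ algebraic numbers $\{\gamma_j:j\ne j_0,\,1\le j\le s\}$, each of degree $\ge 2$.

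Next I will bound the total height of these generators. Writing $f=c\prod_k f_k^{n_k}$ as a product of distinct primitive irreducible factors $f_k\in\Z[x]$ times a positive integer content $c$, one has $M(f)=c\prod_k M(f_k)^{n_k}\ge\prod_k M(f_k)$, since each $M(f_k)\ge 1$. Every root of $f_k$ has height $\log M(f_k)/\deg f_k$, so summing over the distinct roots of $f$ yields
$$
T:=\sum_{j=1}^{s}\h(\gamma_j)\le \sum_{\gamma\text{ distinct root of }f}\h(\gamma)=\sum_k\log M(f_k)\le \log M(f).
$$
Choosing $j_0$ to maximise $\h(\gamma_j)$ over $1\le j\le s$ gives $\h(\gamma_{j_0})\ge T/s$, so using $s\le d$ and Landau's bound $M(f)\le H\sqrt{d+1}$ from~\eqref{trecia} one obtains
$$
\sum_{j\ne j_0}\h(\gamma_j)\le \frac{s-1}{s}\,T\le \frac{d-1}{d}\log(H\sqrt{d+1}).
$$

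Applying Theorem~\ref{primitive} to this generating set of $K$ yields
$$
\h(\alpha)\le \log\bigl((s-1)\lfloor D/2\rfloor\bigr)+\frac{d-1}{d}\log(H\sqrt{d+1})\le \log\bigl((d-1)\lfloor D/2\rfloor\bigr)+\frac{d-1}{d}\log(H\sqrt{d+1}),
$$
which is the first stated inequality. For the second I will exponentiate: since $\alpha$ has degree $D$, $M(\alpha)=\exp(D\h(\alpha))$ gives
$$
M(\alpha)\le \bigl((d-1)\lfloor D/2\rfloor\bigr)^D (H\sqrt{d+1})^{D(d-1)/d},
$$
and the left half of~\eqref{trecia}, $H(\alpha)\le 2^D M(\alpha)$, together with $\lfloor D/2\rfloor\le D/2$ and the identity $2^D(D/2)^D=D^D$, produces the bound as stated. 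The delicate point is the very first step: verifying that a single irrational root can always be dropped from the generating set of $K$ regardless of whether $f$ is reducible or inseparable. Once this is in hand, the rest reduces to a routine chain of inequalities.
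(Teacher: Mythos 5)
Your proof is correct and reaches the stated bounds, but it takes a genuinely different route from the paper's. The paper factors $f=f_0f_1^{n_1}\cdots f_q^{n_q}$ with $f_0$ a product of linear factors and $f_1,\ldots,f_q$ the distinct irreducible factors of degree $\ge 2$, then generates $K$ by taking an arbitrary $d_i-1$ roots from \emph{each} $f_i$ (dropping one root per factor, justified by the rationality of each factor's trace). The sum of the generator heights is then $\sum_i\frac{d_i-1}{d_i}\log M(f_i)$, and the factor $(d-1)/d$ comes directly from $(d_i-1)/d_i\le(d-1)/d$ applied termwise. You instead keep \emph{all} irrational distinct roots except a single one, discarding only the root of largest height, and justify that single drop by applying Vieta to the whole polynomial $f$ rather than factor-by-factor; the exponent $(d-1)/d$ then emerges from the greedy averaging inequality $\sum_{j\ne j_0}\h(\gamma_j)\le\frac{s-1}{s}T\le\frac{d-1}{d}T$ combined with $T\le\log M(f)$. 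Both routes are valid. The paper discards $q$ roots and hence uses a smaller $m$ in Theorem~\ref{primitive}, while you discard only one, but since the final bound only invokes $m\le d-1$ this potential saving is thrown away and the two proofs coincide in the end. Your flagged worry about whether a root can always be dropped is in fact handled correctly by the Vieta argument (each $\mu_{j_0}\ge1$, so the relation is solvable for $\gamma_{j_0}$, and the rational roots contribute only rational data), and your observation that $s\ge2$ in the nontrivial case, because irrational roots come with at least one conjugate among the roots of $f$, is exactly what is needed to make $s-1\ge1$ and the averaging work.
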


\subsection{Preparations}

To prove the above results, we  use the following two known facts.

\begin{lemma}\label{pirma}
Let $K$ be a separable extension of degree $d>1$ of a field $F$. Suppose
$K=F(\al_1,\ldots,\al_m)$. Then, for any finite subset $S$ of $F$, there are at least $|S|^{m-1}(|S|-d+1)$ $m$-tuples $(b_1,\ldots,b_m) \in S^m$ for which the element
$\al=b_1\al_1+\cdots+b_m\al_m$ is primitive for $K$ over $F$, namely,
$K=F(\al)$.
\end{lemma}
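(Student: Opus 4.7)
My plan is to pass to an algebraic closure $\overline{F}$ of $F$ and work with the $d$ distinct $F$-embeddings $\sigma_1,\ldots,\sigma_d\colon K\hookrightarrow\overline{F}$ furnished by separability. The key reduction I would establish first is the following \emph{asymmetric} characterisation of primitivity: for $\alpha=b_1\alpha_1+\cdots+b_m\alpha_m\in K$, one has $K=F(\alpha)$ if and only if $\sigma_1(\alpha)\neq\sigma_i(\alpha)$ for every $i=2,\ldots,d$. One direction is immediate, since the minimal polynomial of a primitive element has $d$ distinct roots. For the converse, assume $E:=F(\alpha)\subsetneq K$; then $[K:E]\geq 2$, and since each $F$-embedding of $E$ extends to exactly $[K:E]$ embeddings of $K$, the fibre over $\sigma_1|_E$ has cardinality $[K:E]\geq 2$, producing some $i\neq 1$ with $\sigma_i|_E=\sigma_1|_E$ and hence $\sigma_i(\alpha)=\sigma_1(\alpha)$.

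Using this characterisation, the bad tuples $(b_1,\ldots,b_m)\in S^m$ are exactly those lying in $\bigcup_{i=2}^{d}V_i$, where
$$V_i=\Bigl\{(b_1,\ldots,b_m)\in F^m\colon \sum_{k=1}^{m}\bigl(\sigma_1(\alpha_k)-\sigma_i(\alpha_k)\bigr)b_k=0\Bigr\}.$$
For each $i\geq 2$, the coefficient vector of this linear form is nonzero, since otherwise $\sigma_1$ and $\sigma_i$ would agree on all of $\alpha_1,\ldots,\alpha_m$ and hence on $K$. To bound $|V_i\cap S^m|$, I would pick an index $k_i$ with $\sigma_1(\alpha_{k_i})\neq\sigma_i(\alpha_{k_i})$: after fixing the other $m-1$ coordinates arbitrarily in $S$, the $k_i$-th coordinate is determined uniquely in $\overline{F}$, so at most one value lies in $S$. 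Hence $|V_i\cap S^m|\leq |S|^{m-1}$, and summing over $i=2,\ldots,d$ gives at most $(d-1)|S|^{m-1}$ bad tuples, leaving at least $|S|^{m-1}(|S|-d+1)$ primitive ones.

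The main point of delicacy is the asymmetric criterion in the first paragraph, which replaces the naive pairwise comparison requiring $\binom{d}{2}$ coincidence conditions by just $d-1$ conditions measured against the fixed embedding $\sigma_1$. Without this step, the same linear-algebra argument would only yield the weaker bound $|S|^{m-1}\bigl(|S|-\binom{d}{2}\bigr)$; the sharper count rests crucially on the coset structure of embeddings over the proper subfield $F(\alpha)$, which is available precisely because the extension is separable.
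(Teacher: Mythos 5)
The paper does not reproduce a proof of this lemma; it cites it as the main result of Brawley and Gao (\cite{wea}), so there is no in-paper argument to compare word-for-word. Your proof is correct, and the essential device — the asymmetric criterion $\sigma_1(\alpha)\neq\sigma_i(\alpha)$ for $i=2,\ldots,d$ (proved via the coset structure of embeddings over $F(\alpha)$), followed by a union bound over the $d-1$ nontrivial linear forms — is precisely what yields the sharp $(|S|-d+1)$ factor and matches the Schwartz--Zippel-for-products-of-linear-forms argument of the cited source.
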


\begin{lemma}\label{antra}
Let $f \in \Z[x_1,\ldots,x_m]$ be a non-zero polynomial in $m$ variables.  Then, for any algebraic numbers $\ga_1,\ldots,\ga_m$, we have
$$
\h(f(\ga_1,\ldots,\ga_m)) \leq \log L(f)+\sum_{i=1}^m \h(\ga_i) \deg_{x_i} f,
$$
where $\deg_{x_i} f$ is the partial degree of $f$, and $L(f)$ is the sum of moduli of the coefficients of $f$.
\end{lemma}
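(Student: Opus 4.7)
The plan is to prove Lemma~\ref{antra} place by place over a number field $L$ containing all the $\ga_j$. Writing $f=\sum_{\mathbf{i}}c_{\mathbf{i}}x^{\mathbf{i}}$ with $c_{\mathbf{i}}\in\Z$ and $\mathbf{i}=(i_1,\ldots,i_m)$ ranging over multi-indices with $0\le i_j\le d_j:=\deg_{x_j}f$, I would split the normalized places $v$ of $L$ into archimedean and non-archimedean ones and use the standard formula
$$
\h(\be)=\sum_{v}\frac{[L_v:\Q_v]}{[L:\Q]}\log^{+}|\be|_v
$$
for any $\be\in L$, where $|\cdot|_v$ extends the usual absolute value on $\Q$ or $\Q_p$.

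At an archimedean place $v$, the triangle inequality together with $|c_{\mathbf{i}}|_v=|c_{\mathbf{i}}|$ yields
$$
|f(\ga_1,\ldots,\ga_m)|_v\le\sum_{\mathbf{i}}|c_{\mathbf{i}}|\prod_{j=1}^{m}|\ga_j|_v^{i_j}\le L(f)\prod_{j=1}^{m}\max\{1,|\ga_j|_v\}^{d_j},
$$
whence, using $L(f)\ge 1$,
$$
\log^{+}|f(\ga_1,\ldots,\ga_m)|_v\le\log L(f)+\sum_{j=1}^{m}d_j\log^{+}|\ga_j|_v.
$$
At a non-archimedean place $v$, every $c_{\mathbf{i}}\in\Z$ satisfies $|c_{\mathbf{i}}|_v\le 1$, and the ultrametric inequality gives
$$
|f(\ga_1,\ldots,\ga_m)|_v\le\max_{\mathbf{i}}\prod_{j=1}^{m}|\ga_j|_v^{i_j}\le\prod_{j=1}^{m}\max\{1,|\ga_j|_v\}^{d_j},
$$
so that
$$
\log^{+}|f(\ga_1,\ldots,\ga_m)|_v\le\sum_{j=1}^{m}d_j\log^{+}|\ga_j|_v.
$$

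Finally, I would multiply each inequality by the local weight $[L_v:\Q_v]/[L:\Q]$ and sum over all $v$. Since the archimedean local degrees sum to $[L:\Q]$, the $\log L(f)$ contribution collapses to exactly $\log L(f)$, while the remaining terms reassemble into $\sum_{j=1}^{m}d_j\,\h(\ga_j)$ by the height formula above, giving the desired bound. The only subtlety is keeping the normalization of the local absolute values consistent with that of the Weil height; beyond this bookkeeping, the argument is a direct generalization of the standard estimates $\h(\al+\be)\le\h(\al)+\h(\be)+\log 2$ and $\h(\al\be)\le\h(\al)+\h(\be)$, and presents no real obstacle.
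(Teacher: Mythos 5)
Your proof is correct. The paper itself offers no proof of Lemma~\ref{antra}; it simply cites it as Lemma~3.7 of Waldschmidt's book, and the place-by-place argument you give (archimedean triangle inequality plus $|c_{\mathbf{i}}|_v=|c_{\mathbf{i}}|$, ultrametric inequality plus $|c_{\mathbf{i}}|_v\le 1$ at finite places, then summing with the local weights and using that the archimedean local degrees sum to $[L:\Q]$) is precisely the standard proof of that lemma. The only minor point worth making explicit is the trivial case $f(\ga_1,\ldots,\ga_m)=0$, for which the stated inequality holds vacuously under the usual convention $\h(0)=0$; otherwise the bookkeeping is exactly as you describe and there is no gap.
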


Lemma~\ref{pirma} is the main result of~\cite{wea} (see also~\cite[Lemma~3.3]{wid} for a slightly weaker result), whereas Lemma~\ref{antra} is exactly~\cite[Lemma 3.7]{wa}.

\subsection{Proofs}

\begin{proof}[Proof of Theorem~\ref{primitive}] We apply Lemma~\ref{pirma} to
$$F=\Q \mand
S=\{-\lfloor d/2 \rfloor, \ldots, \lfloor d/2 \rfloor\}
$$
(note that $d>1$). Since $|S|=2 \lfloor d/2 \rfloor+1 \geq d$,
the number $|S|^{m-1}(|S|-d+1) \ge |S|^{m-1}$ is positive. Thus,
there are some $m$ (not necessarily distinct) integers $b_1,\ldots,b_m \in S$ such that the element $\al=b_1\al_1+\cdots+b_m\al_m$ satisfies $K=\Q(\al)$. Applying Lemma~\ref{antra} to the polynomial $f(x_1,\ldots,x_m)=b_1 x_1+\cdots+b_m x_m$ of length $L(f) = |b_1|+\cdots+|b_m|
\leq m \lfloor d/2 \rfloor$, with $\ga_1=\al_1, \ldots, \ga_m=\al_m$, we deduce
\begin{align*}
 \frac{\log M(\al)}{d}  =\h(\al)
 &=\h(f(\al_1,\ldots,\al_m)) \\
 &\leq  \log (m \lfloor d/2 \rfloor)+\h(\al_1)+\cdots+\h(\al_m) \\
 & =\log (m \lfloor d/2 \rfloor) +\log \(\prod_{i=1}^m M(\al_i)^{1/d_i}\).
 \end{align*}
This implies the required inequalities of Theorem~\ref{primitive}.
\end{proof}

\begin{proof}[Proof of  Corollary~\ref{pag0}] Observe that,
by the right inequality of~\eqref{trecia},
we have $M(\al_i) \leq H_i \sqrt{d_i+1}$
for $i=1,\ldots,m$. Thus,
$$
\prod_{i=1}^m M(\al_i)^{d/d_i} \leq \prod_{i=1}^m H_i^{d/d_i} \prod_{i=1}^m (d_i+1)^{d/(2d_i)}.
$$
Now, selecting $\al$ as in Theorem~\ref{primitive}, we have $\deg \al=d$. Hence, by the left inequality
of~\eqref{trecia} and Theorem~\ref{primitive}, we obtain
\begin{align*}
H(\al)  & \leq 2^d M(\al) \leq  (md)^d \prod_{i=1}^m M(\al_i)^{d/d_i} \\& \leq  (md)^d \prod_{i=1}^m (d_i+1)^{d/(2d_i)}  \prod_{i=1}^m H_i^{d/d_i},
 \end{align*}
as claimed.
\end{proof}

\begin{proof}[Proof of Corollary~\ref{pag}] We write the polynomial  $f \in \Z[x]$ in the form
$f=f_0 f_1^{n_1} \cdots f_q^{n_q}$, where $f_1,\ldots,f_q \in \Z[x]$ are
distinct irreducible polynomials
of degrees $d_1,\ldots,d_q \geq 2$, respectively, and $f_{0} \in \Z[x]$  is a product of linear polynomials.
Assume that $q \geq 1$, since otherwise the claim is trivial, by taking $\al=1$. Thus, $D>1$. Furthermore, in view of
$$
d=n_1d_1+\ldots+n_qd_q+\deg f_{0},
$$
we have $d_i \leq d$ for each $i=1,\ldots,q$.

Put $m=d_1-1+\cdots+d_q-1$. It is clear that the splitting field $K$ of $f$ is generated by arbitrary $d_1-1$ roots
of $f_1$, arbitrary $d_2-1$ roots
of $f_2$, $\ldots$, arbitrary $d_q-1$ roots of $f_q$.
By Theorem~\ref{primitive}, there is an algebraic number $\al \in K$ satisfying $K=\Q(\al)$ and
$$
M(\al) \leq (m \lfloor D/2 \rfloor)^D \prod_{i=1}^q M(f_i)^{(d_i-1)D/d_i},
$$
since we have $d_i-1$ copies of $M(f_i)$ for each $i=1,\ldots,q$.
Using $(d_i-1)/d_i \leq (d-1)/d$ (which follows from $d_i \leq d$) and
\begin{align*}
M(f_1) \cdots M(f_q) & = M(f_1\cdots f_q)\\
& \leq M(f_1\cdots f_q)M(f_0f_1^{n_1-1}\cdots f_q^{n_q-1})=M(f)
\end{align*}
(which follows from the multiplicativity of the Mahler measure and $M(f_i) \geq 1$),
we find that
$$
M(\al) \leq (m \lfloor D/2 \rfloor)^D M(f)^{(d-1)D/d}.
$$
Note that $m \leq d-1$, and by the right inequality of~\eqref{trecia}, $M(f) \leq H(f) \sqrt{d+1}=H\sqrt{d+1}$. Therefore, using these estimates and applying the left inequality of~\eqref{trecia}, we find that
\begin{align*}
\h(\al)=\frac{\log M(\al)}{D}&\le \log(m \lfloor D/2 \rfloor)+\frac{d-1}{d}\log (M(f))\\
&\le \log((d-1) \lfloor D/2 \rfloor)+\frac{d-1}{d}\log (H\sqrt{d+1}),
\end{align*}
and
\begin{align*}
H(\al)  \leq 2^D M(\al)
&\leq (mD)^D M(f)^{(d-1)D/d}\\
& \leq ((d-1)D)^D (H \sqrt{d+1})^{(d-1)D/d}\\
&= (d-1)^D D^D (d+1)^{(d-1)D/(2d)} H^{(d-1)D/d},
\end{align*}
as claimed.
\end{proof}

\section{Bounding the heights of coefficients}\label{coefficient00}

\subsection{Main result}

Let $L/K$ be a number field extension of degree $d\ge 2$, and $L=K(\alpha)$. Then, for any non-zero $\beta\in L$, there exist some $a_0, a_1,\ldots,a_{d-1}\in K$ such that
$$
\beta=a_0+a_1\alpha+\cdots+a_{d-1}\alpha^{d-1}.
$$
Now, we  bound the height of each coefficient $a_i$, $0\le i\le d-1$,
as follows:

\begin{theorem}\label{coefficient0}
Let $L/K$ be a number field extension of degree $d\geq 2$, and $L=K(\alpha)$. Given non-zero $\beta\in L$, and $a_0, a_1,\ldots,a_{d-1}\in K$, such that
$$
\beta=a_0+a_1\alpha+\cdots+a_{d-1}\alpha^{d-1},
$$
we have
$$
\h(a_i)\leq d\h(\be)+3d(d-1)\h(\al)+ d \log {d-1 \choose i}+d(d-1)\log 2+\log d,
$$
for $i=0,1,\ldots,d-1$.
\end{theorem}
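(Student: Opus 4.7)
The plan is to express $a_i$ explicitly via Lagrange interpolation and then apply standard height inequalities. Let $\al = \al_1, \al_2, \ldots, \al_d$ be the roots of the minimal polynomial of $\al$ over $K$, and put $g(x) = a_0 + a_1 x + \cdots + a_{d-1} x^{d-1} \in K[x]$. Let $\sigma_j : L \hookrightarrow \overline{K}$ ($1 \le j \le d$) be the distinct $K$-embeddings of $L$, normalised so that $\sigma_j(\al) = \al_j$; since each $\sigma_j$ fixes $K$ and hence the coefficients of $g$, we have $\be_j := \sigma_j(\be) = g(\al_j)$. As $g$ has degree $< d$ and agrees with the values $\be_j$ at the $d$ distinct points $\al_j$, Lagrange interpolation gives
\begin{equation*}
a_i = \sum_{j=1}^{d} \frac{(-1)^{d-1-i}\, \be_j\, e_{d-1-i}\bigl(\{\al_k\}_{k \ne j}\bigr)}{\prod_{k \ne j}(\al_j - \al_k)},
\end{equation*}
where $e_r$ denotes the $r$-th elementary symmetric polynomial.

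Each $\sigma_j$ extends to an element of $\Gal(\overline{\Q}/\Q)$, under which the absolute logarithmic height is invariant, so $\h(\al_j) = \h(\al)$ and $\h(\be_j) = \h(\be)$ for every $j$. I then apply the standard inequalities $\h(xy) \le \h(x) + \h(y)$, $\h(1/x) = \h(x)$, and $\h(x_1 + \cdots + x_N) \le \log N + \sum_j \h(x_j)$, together with the place-by-place estimate
\begin{equation*}
\h\bigl(e_r(\ga_1, \ldots, \ga_n)\bigr) \le \log \binom{n}{r} + \sum_{k=1}^{n} \h(\ga_k),
\end{equation*}
which follows from $|e_r|_v \le \binom{n}{r} \prod_k \max(1, |\ga_k|_v)$ at archimedean $v$ and $|e_r|_v \le \prod_k \max(1, |\ga_k|_v)$ at non-archimedean $v$.

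Applying this with $n = d-1$ gives $\h\bigl(e_{d-1-i}(\{\al_k\}_{k \ne j})\bigr) \le \log\binom{d-1}{i} + (d-1)\h(\al)$, while $\h(\al_j - \al_k) \le 2\h(\al) + \log 2$ together with the multiplicative inequality yields $\h\bigl(\prod_{k \ne j}(\al_j - \al_k)\bigr) \le 2(d-1)\h(\al) + (d-1)\log 2$. Each of the $d$ summands in the Lagrange expression therefore has height at most $\h(\be) + \log\binom{d-1}{i} + 3(d-1)\h(\al) + (d-1)\log 2$, and a final application of the sum bound with $N = d$ produces exactly the claimed estimate. The only real subtlety is to use the sharp archimedean binomial bound for $e_r$ directly at each place; expanding $e_r$ as a sum of $\binom{d-1}{i}$ monomials and applying $\h(x_1 + \cdots + x_N) \le \log N + \sum_j \h(x_j)$ would introduce an unnecessary exponential-in-$d$ factor in the final constant.
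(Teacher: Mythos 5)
Your proof is correct and follows essentially the same route as the paper: the Lagrange interpolation formula you use is exactly the Vandermonde matrix inversion the paper cites from Klinger, and the place-by-place bound you derive for $e_r$ is precisely the special case of Waldschmidt's Lemma 3.7 (Lemma~\ref{antra} in the paper) that the paper invokes. The subsequent bounds on the numerator, denominator, and the final sum-of-$d$-terms step all match the paper's argument term for term.
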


Note that, since for the binomial coefficients we have
$${d-1 \choose i} \leq 2^{d-1},\quad i=0,1,\ldots,d-1,
$$
Theorem~\ref{coefficient0} implies that
$$
\h(a_i)\leq d\h(\be)+3d(d-1)\h(\al)+2d(d-1)\log 2+\log d
$$
for each $i=0,1,\ldots,d-1$. This implies the following corollary.
\begin{corollary}\label{coefficient}
{\it
Under the same assumptions and notation as in Theorem~\ref{coefficient0}, we have
$$
\h(a_i)< d\h(\be)+3d^2\h(\al)+ 2d^2,
$$
for $i=0,1,\ldots,d-1$.
}
\end{corollary}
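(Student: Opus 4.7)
The plan is to mimic the classical strategy of recovering polynomial coefficients from values via Lagrange interpolation. Let $\alpha = \alpha^{(1)}, \alpha^{(2)}, \ldots, \alpha^{(d)}$ denote the $K$-conjugates of $\alpha$, and for each $j$ let $\beta^{(j)}$ be the image of $\beta$ under the unique $K$-embedding of $L$ into a Galois closure $M$ of $L/K$ sending $\alpha$ to $\alpha^{(j)}$; in particular $\h(\alpha^{(j)}) = \h(\alpha)$ and $\h(\beta^{(j)}) = \h(\beta)$ for every $j$. Applying the Lagrange formula to $f(x) = \sum_i a_i x^i$ at the nodes $\alpha^{(j)}$, where $f(\alpha^{(j)}) = \beta^{(j)}$, and reading off the coefficient of $x^i$ from each basis polynomial yields
$$a_i = (-1)^{d-1-i} \sum_{j=1}^{d} y_j, \qquad y_j := \frac{\beta^{(j)}\, e_{d-1-i}(\alpha^{(k)} : k \ne j)}{\prod_{k\ne j}(\alpha^{(j)}-\alpha^{(k)})},$$
where $e_l$ denotes the $l$-th elementary symmetric polynomial.

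The next step is to exploit Galois invariance of the Weil height. For any $j$, any $K$-automorphism $\sigma$ of $M$ with $\sigma(\alpha) = \alpha^{(j)}$ permutes the other $\alpha^{(k)}$ among themselves and sends $\beta$ to $\beta^{(j)}$, so $\sigma(y_1) = y_j$, and hence $\h(y_j) = \h(y_1)$. Combined with the elementary estimate $\h(x_1 + \cdots + x_N) \leq \log N + \sum_j \h(x_j)$ (which I would verify by bounding $|\sum_j x_j|_v \leq N \max_j |x_j|_v$ at archimedean places and invoking the ultrametric inequality elsewhere), this reduces the task to bounding a single summand:
$$\h(a_i) \leq \log d + d \cdot \h(y_1).$$

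I would then factor $y_1 = \beta \cdot E \cdot P^{-1}$ with $E := e_{d-1-i}(\alpha^{(k)} : k \ne 1)$ and $P := \prod_{k \ne 1}(\alpha - \alpha^{(k)})$, so that $\h(y_1) \leq \h(\beta) + \h(E) + \h(P)$. The bound on $P$ is routine: each factor satisfies $\h(\alpha - \alpha^{(k)}) \leq 2\h(\alpha) + \log 2$, so the product of $d-1$ such factors gives $\h(P) \leq 2(d-1)\h(\alpha) + (d-1)\log 2$. The main obstacle is obtaining a sharp bound on $\h(E)$: a naive application of the sum inequality to the $\binom{d-1}{i}$ monomials of $E$ would pick up an extra factor of $\binom{d-1}{i}$ multiplying $(d-1-i)\h(\alpha)$, vastly exceeding the target. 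I would avoid this loss by arguing place by place in $M$: at each archimedean place $v$ the triangle inequality gives $|E|_v \leq \binom{d-1}{i}\prod_{k \ne 1}\max(1,|\alpha^{(k)}|_v)$, while at non-archimedean places the ultrametric inequality removes the binomial factor. Taking logarithms of $\max(1,|E|_v)$ and summing against the normalized local degrees $[M_v:\Q_v]/[M:\Q]$, using that the archimedean weights sum to $1$ and $\h(\alpha^{(k)}) = \h(\alpha)$, then yields the key estimate $\h(E) \leq \log\binom{d-1}{i} + (d-1)\h(\alpha)$.

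Putting these bounds together produces $\h(y_1) \leq \h(\beta) + \log\binom{d-1}{i} + 3(d-1)\h(\alpha) + (d-1)\log 2$, and multiplying by $d$ and adding $\log d$ recovers the claimed inequality exactly.
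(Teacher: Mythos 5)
Your argument is correct and is essentially the paper's proof: the Lagrange-interpolation expression for $a_i$ is precisely the Vandermonde-inverse formula~\eqref{uyt} that the paper quotes from Klinger, your Galois-invariance remark just packages the fact that all $d$ summands there receive the same height bound, and your place-by-place derivation of $\h(E)\le\log\binom{d-1}{i}+(d-1)\h(\alpha)$ is an in-lined proof of the paper's Lemma~\ref{antra} (Waldschmidt's height bound for polynomial values) applied to the elementary symmetric polynomial. One small inaccuracy at the very end: multiplying by $d$ and adding $\log d$ recovers the sharper bound of Theorem~\ref{coefficient0}, namely $d\h(\beta)+3d(d-1)\h(\alpha)+d\log\binom{d-1}{i}+d(d-1)\log 2+\log d$, not the stated Corollary~\ref{coefficient}; to finish you still need $\binom{d-1}{i}\le 2^{d-1}$ together with $3d(d-1)\le 3d^2$ and $2d(d-1)\log 2+\log d<2d^2$.
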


\subsection{Proof of Theorem~\ref{coefficient0}}

In the sequel, we  use the following formulas without special reference (see, e.g., \cite{wa}). For any $n\in \mathbb{Z}$ and $b_{1},\cdots,b_{k},\gamma\in \overline{\mathbb{Q}}$, we have
\begin{align*}
&\h(b_{1}+\cdots+b_{k})\le \h(b_{1})+\cdots+\h(b_{k})+\log k,\\
&\h(b_{1}\cdots b_{k})\le \h(b_{1})+\cdots+\h(b_{k}),\\
&\h(\gamma^{n})=|n|\h(\gamma),\\
& \h(\zeta)=0 \textrm{\quad for any root of unity $\zeta\in \overline{\Q}$}.
\end{align*}

We now assume that $\al_1=\al, \al_2,\ldots,\al_d$ are the conjugates of $\al$ over the field $K$.
Put
$$
 \be_i=\sum_{j=0}^{d-1} a_j \al_i^j, \qquad \text{for} \ i=1,\ldots,d.
$$
So, $\h(\al_i)=\h(\al)$ and $\h(\be_i)=\h(\be)$ for $1\le i\le d$.

To solve the above system of $d$ linear equations in $d$ unknowns $a_0,\ldots,a_{d-1}$, we denote the appearing Vandermonde matrix by
$V=\left(\al_i^{j-1}\right)_{1 \leq i,j \leq d}$. By~\cite[Formula (6)]{Klinger1967}, the inverse of $V$ is given by
$$
V^{-1}=\left( \frac{(-1)^{i+j} \sigma_{d-j}(\al_1,\ldots,\widehat{\al_{i}},\ldots,\al_d)}{\prod\limits_{m=1}^{i-1}(\al_{i}-\al_m)\prod\limits_{k=i+1}^{d}(\al_{k}-\al_{i})}\right)_{1 \leq i,j \leq d}^{T},
$$
where $T$ stands for the transpose, and $\sigma_{k}(\al_1,\ldots,\widehat{\al_{i}},\ldots,\al_d)$ stands for the $k$-th symmetric function in the $d-1$ variables
$\al_1,\ldots,\al_d$ without $\al_{i}$; for instance, in the case $i=d$, we have
$\sigma_{1}(\al_1,\ldots,\al_{d-1})=\al_1+\cdots+\al_{d-1}$ and $\sigma_{d-1}(\al_1,\ldots,\al_{d-1})=\al_1\cdots \al_{d-1}$.

Hence,
\begin{equation}\label{uyt}
 a_{j-1} = \sum_{i=1}^d \beta_{i} \frac{(-1)^{i+j} \sigma_{d-j}(\al_1,\ldots,\widehat{\al_{i}},\ldots,\al_d)}{\prod\limits_{m=1}^{i-1}(\al_{i}-\al_m)\prod\limits_{k=i+1}^{d}(\al_{k}-\al_{i})}.
\end{equation}

Since $\sigma_{d-j}(\al_1,\ldots,\widehat{\al_{i}},\ldots,\al_d)$
is a polynomial with coefficients $1$ in $d-1$ variables
 $\al_1,\ldots,\al_d$ (without $\al_{i}$) of degree $d-j$, length ${d-1 \choose d-j}$, and degree $1$ in each variable $\al_k$, $k \ne i$, by Lemma~\ref{antra}, we find that
\begin{align*}
\h(\sigma_{d-j}(\al_1,\ldots,\widehat{\al_{i}},\ldots,\al_d))
&\leq \log {d-1 \choose d-j}+\sum_{k \ne i} \h(\al_k)\\
&=\log {d-1 \choose d-j}+(d-1)\h(\al).
\end{align*}
On the other hand, in order to bound the denominator of~\eqref{uyt} we observe that
$$
\h\(\prod_{k \ne i} (\al_k-\al_i)\)\le \sum_{k \ne i} \h(\al_k-\al_i) \leq (2d-2)\h(\al)+(d-1)\log 2,
$$
since each term $\h(\al_k-\al_i)$ does not exceed $2\h(\al)+\log 2$.

Thus, the absolute logarithmic height of each of the $d$ summands in~\eqref{uyt} is bounded from above by
$$
\h(\be)+(3d-3)\h(\al)+ \log {d-1 \choose d-j} + (d-1)\log 2.
$$

Hence,  we conclude that
\begin{align*}
\h(a&_{j-1}) \\ &\leq d \(\h(\be)+(3d-3)\h(\al)+ \log {d-1 \choose d-j} + (d-1)\log 2 \)+\log d
\end{align*}
for $j=1,\ldots,d$.  By replacing $j-1$ by $i$ and observing that
$${d-1 \choose d-j}={d-1 \choose d-i-1} = {d-1 \choose i},
$$
we see that this is exactly the required inequality of Theorem~\ref{coefficient0}.

\section{Simple roots of polynomials modulo a prime}\label{polynomial0}

\subsection{Background and main results}

In this section, given an irreducible polynomial $f\in\Z[X]$, we
derive an upper bound for the smallest prime $p$ such that $f$ has a simple root modulo $p$.

First of all, we mention a sharp upper bound of Bella\"iche~\cite{Bellaiche2013} under assumption that both the Generalized Riemann Hypothesis and the Artin Conjecture are true for the Artin $L$-functions associated to the irreducible representations of $G$,
where $G$ is the Galois group of the splitting field of $f$ over $\Q$. Namely, under the above assumptions,
by Bella\"iche~\cite[Th\'eor\`eme 16]{Bellaiche2013},
if $M$ is the product of all the distinct prime divisors of the discriminant of
 a monic irreducible polynomial $f \in \Z[x]$ of degree $d\ge 1$, then
\begin{itemize}
\item  there exists a prime $p=O(d^2(\log M+d\log d)^2)$ such that $p\nmid M$ and $f$ has at least one root modulo $p$;

\item There exists a prime $p=O(d^4(\log M+d\log d)^2)$ such that $p\nmid M$ and $f$ has at least two roots modulo $p$.

\end{itemize}

Here, we  give unconditional upper bounds of such smallest prime $p$ for any irreducible polynomial $f\in\Z[X]$ without assuming that $f$ is monic.
In fact, for our purpose we need a slightly more general result
where $p$ also avoids divisors of a given integer $Q$.

Assume first that the polynomial $f$ which we consider is of degree $1$. Then, we can take the smallest prime $p$ which is coprime to the leading coefficient of $f$. So in the sequel, we suppose that the degree of $f$ is greater than or equal to $2$.

We first give a generic approach on how to find such a prime $p$, which yields a rather simple upper bound for $p$.

\begin{theorem}\label{generic}
Given an irreducible polynomial $f=a_dX^d+\cdots+a_1X+a_0\in\Z[X]$ of degree $d\ge 2$ and of
height $H$, there exists a prime number
$$
p\le\left\{ \begin{array}{ll}
                H,  & \textrm{if $\gcd(a_0,M)=1$ and $|a_0|>1$},\\
                2H(dM)^d,  & \textrm{if $|a_0|=1$},\\
                2H(dHM)^d, & \textrm{if $\gcd(a_0,M)>1$},
                 \end{array} \right.
$$
where $M$ is the product of all the distinct prime divisors of the discriminant of $f$,
such that $f$ has a simple root modulo $p$.
\end{theorem}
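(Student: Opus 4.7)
All three cases share one strategy: produce an integer $n$ and a prime $p\nmid M$ with $p\mid f(n)$. Since $M$ is the radical of $\mathrm{disc}(f)$, any such $p$ is coprime to $\mathrm{disc}(f)$, so $\bar f\in\F_p[x]$ is separable and the class of $n$ modulo $p$ is automatically a \emph{simple} root of $\bar f$. The work lies in choosing $n$ so that $p$ is small.

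For the first case I would take $p$ to be any prime factor of $a_0=f(0)$: the hypothesis $\gcd(a_0,M)=1$ forces $p\nmid M$, while $p\le|a_0|\le H$. The second case is harder because $a_0=\pm 1$ has no prime factor, so I evaluate $f$ at multiples of $M$: the congruence $f(kM)\equiv a_0=\pm 1\pmod M$ gives $\gcd(f(kM),M)=1$ for free, and it remains to choose $k$ small with $|f(kM)|\ge 2$. Irreducibility with $d\ge 2$ means each of $f(x)\mp 1$ is a nonzero polynomial of degree $d$ with at most $d$ integer roots, so $|f(kM)|=1$ holds for at most $2d$ integer values of $k$; some $k=O(d)$ therefore works, and for this $k$ the smallest prime divisor of $f(kM)$ is at most $|f(kM)|\le H\sum_{i=0}^d(kM)^i\le 2H(kM)^d$, which rearranges to the claimed $2H(dM)^d$ (sharpening $k\le d$ uses that $f(0)=\pm 1$ already accounts for one root of $f(x)\mp 1$, together with the easy lower bound $M\ge 2$ valid for any irreducible $f$ of degree $\ge 2$).

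The third case is where the bulk of the work lies, since $\gcd(a_0,M)>1$ destroys the clean relation $\gcd(f(kM),M)=1$. My plan is to first locate a shift $b$, of size bounded by $O(HM)$, such that $\gcd(f(b),M)=1$, and then apply the Case~2 argument to the translate $g(x):=f(x+b)$, which has the same discriminant as $f$ and a constant term $g(0)=f(b)$ coprime to $M$. To produce $b$, for each prime $q\mid M$ with $q>d$ the reduction $\bar f\in\F_q[x]$ has at most $d<q$ roots, so admits a non-root residue modulo $q$; by the Chinese Remainder Theorem these combine into a residue class of $b$ modulo $\prod_{q\mid M,\,q>d}q$, which divides $M$. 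The delicate case is when a prime $q\le d$ divides $M$ and $(x^q-x)\mid\bar f$ in $\F_q[x]$, so every residue of $\F_q$ is a root of $\bar f$ and $q\mid f(b)$ for every integer $b$; for such $q$ I would instead choose $b$ to minimise the $q$-adic valuation of $f(b)$, exploiting again that $f$ cannot attain a fixed integer value on more than $d$ distinct inputs. The need to search for $b$ in a window of length $O(HM)$ rather than $O(M)$ is precisely what contributes the extra factor $H^d$ in the final bound $2H(dHM)^d$.

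The main obstacle is exactly this last point in Case~3: the ``totally split'' small primes $q\le d$ of $M$, where the coprimality substitution fails and has to be replaced by a $q$-adic valuation argument. Everything else is elementary, driven by polynomial degree bounds on root counts and the free identity $\gcd(f(kM),M)=|a_0|$ that powers Cases~1 and 2.
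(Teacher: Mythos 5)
Your Cases~1 and~2 are essentially the same as the paper's: pick a prime divisor of $f(0)=a_0$ when $|a_0|>1$ is coprime to $M$, and otherwise evaluate $f$ at $\pm iM$ for $0\le i\le d$ and use the free congruence $f(kM)\equiv a_0\pmod M$ together with the fact that $f\mp1$ each have at most $d$ roots. (Your parenthetical appeal to $M\ge 2$ is not needed; $2d+1$ candidate points already suffice, and the paper simply uses $\pm iM$, $0\le i\le d$.)

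Your Case~3, however, diverges from the paper and has a genuine gap. You propose to find a shift $b$ with $\gcd(f(b),M)=1$ and then run Case~2 on $g(x)=f(x+b)$. You yourself identify the obstruction: for a small prime $q\le d$ with $q\mid M$ and $x^q-x\mid\bar f$ in $\F_q[x]$, every integer $b$ satisfies $q\mid f(b)$, so no such $b$ exists. Your fallback — choose $b$ to minimize $v_q(f(b))$ — does not close this: after the shift, $g(0)=f(b)$ is still divisible by $q$, so $\gcd(g(0),M)>1$ and you are back in Case~3 rather than Case~2. You would then need to argue separately that $f(b)$ divided by the offending prime powers is still $>1$ in absolute value and coprime to $M$, which is not established and does not obviously follow from counting value repetitions. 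Also, your heuristic ``window of length $O(HM)$'' is introduced only to match the shape of the final bound and is never derived.

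The paper resolves Case~3 with a cleaner device that avoids the shift entirely: evaluate $f$ at $\pm i\,a_0M$ for $0\le i\le d$, note that $a_0\mid f(i a_0 M)$ and that
$$
\frac{f(i a_0 M)}{a_0}\equiv 1\pmod M,
$$
so \emph{any} prime divisor $p$ of $f(i_0a_0M)/a_0$ is automatically coprime to $M$. Since $f\pm a_0$ each have at most $d$ roots, some $i_0\le d$ gives $|f(i_0 a_0 M)|\neq|a_0|$, hence $f(i_0a_0M)/a_0\neq\pm1$, and then $p\le|f(i_0a_0M)|\le 2H(d|a_0|M)^d\le 2H(dHM)^d$. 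In short: instead of trying to force coprimality by translating $f$, the paper scales the sample points by $a_0$ and divides out $a_0$ from the value, which handles the troublesome primes $q\le d$ dividing $M$ uniformly and with no case analysis. If you want to salvage your own route, you would have to supply an actual argument for the totally-split small primes; the paper's $a_0$-trick is the idea you are missing.
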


We now present an upper bound for such a prime $p$ which behaves much better
than that of Theorem~\ref{generic} with
respect to $H$ (however, in some cases Theorem~\ref{generic}
is still stronger). In fact, we present it in a slightly more general
form.

\begin{theorem}\label{polynomial}
Given an irreducible polynomial $f\in\Z[X]$ of height $H$ and of degree $d\ge 2$, and an integer $Q \ge 3$.
 Then, there exists a prime number $p$ satisfying
$$
 p\le  C^d H\(d\log Q\log^{+}H \)^d+
H (\log Q)^{c d^2},
$$
where  $c$ and $C$  some absolute constants,
such that $f$ has a root modulo $p$ and $p\nmid Q$.
\end{theorem}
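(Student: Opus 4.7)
The plan is to carry out an explicit counting argument in the spirit of Theorem~\ref{generic} but with a more careful separation of primes. Set $N := \lceil K_{0}\,d\log Q\cdot\log^{+}H\rceil$ for a sufficiently large absolute constant $K_{0}$, and consider the integers $f(1), f(2), \ldots, f(N)$. Since $f$ is irreducible of degree $d \ge 2$, each $f(n)$ is non-zero, and $|f(n)|=1$ holds for at most $2d$ values of $n$ (the integer roots of $f\pm 1$). Moreover $|f(n)| \le H(d+1)n^{d} \le H(d+1)N^{d}$. The aim is to produce some $n \in [1, N]$ and some prime $p\mid f(n)$ with $p \nmid Q$: the congruence $p\mid f(n)$ exhibits $n \bmod p$ as a root of $f$ modulo $p$, and the choice of $N$ automatically forces $p \le H(d+1)N^{d} = O\bigl(H(d\log Q\log^{+}H)^{d}\bigr)$, matching the main term of the claimed bound.

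Argue by contradiction: suppose every prime factor of every $f(n)$, $n\in[1,N]$, divides $Q$. Then each $|f(n)|$ is $Q$-smooth, and one bounds the $Q$-contribution $\sum_{p\mid Q} v_{p}\!\bigl(\prod_{n} f(n)\bigr)\log p$ from above, splitting primes $p\mid Q$ into \emph{good} ones (coprime to $a_{d}\,\mathrm{disc}(f)$) and \emph{bad} ones. The elementary estimate $|\mathrm{disc}(f)| \le d^{d} H^{2d-2}$ limits the number of bad primes to $O(d\log^{+}H + d\log d)$. For good primes, Hensel's lemma provides at most $d$ roots of $f$ modulo every $p^{k}$, whence
$$ \sum_{n=1}^{N} v_{p}(f(n)) \;\le\; \frac{dN}{p-1} + d\,\Bigl\lfloor \frac{\log\bigl(H(d+1)N^{d}\bigr)}{\log p}\Bigr\rfloor .$$
Summing over good primes $p\mid Q$ using the Mertens-type bounds $\sum_{p\mid Q}\log p/(p-1) = O(\log\log Q)$ and $\omega(Q) = O(\log Q/\log\log Q)$ yields a contribution fitting inside the main term $C^{d} H (d\log Q\log^{+}H)^{d}$; for the bad primes the analogous counting (with multiplicity $e_{p}\ge 2$ of the relevant roots modulo $p$) produces the weaker estimate responsible for the secondary term $H(\log Q)^{cd^{2}}$.

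The main obstacle is precisely these bad primes dividing both $Q$ and $a_{d}\,\mathrm{disc}(f)$: Hensel's lemma fails there, and the number of lifts of a repeated root of multiplicity $e_{p}$ to modulus $p^{k}$ is $O(p^{k(1-1/e_{p})})$ rather than the clean $O(1)$, so only a much coarser bound is available, and this is what inflates the $\log Q$-exponent to $cd^{2}$. A cleaner way to organize the argument is to enlarge $Q$ at the outset to $Q' := Q\cdot\mathrm{rad}(a_{d}\,\mathrm{disc}(f))$, noting $\log Q' \le \log Q + O(d\log^{+}H)$: every prime $\nmid Q'$ is then automatically good, the Hensel count applies uniformly, and expanding $(\log Q+O(d\log^{+}H))^{d}$ binomially after the final comparison recovers both terms of the stated bound. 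Comparing the resulting upper bound on the $Q'$-contribution with the lower estimate $\log\prod_{n=1}^{N}|f(n)| \ge (d-o(1))N\log N$ (valid once $N$ exceeds the integer roots of $f\pm 1$) gives the contradiction. Secondary nuisances include handling the small-height regime $H\le e$ (so that $\log^{+}H=1$) as a separate case, and calibrating the absolute constants $K_{0}, C, c$ so that every error term is absorbed.
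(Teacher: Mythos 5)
Your overall strategy---bounding $\prod_{n\le N}|f(n)|$ from above under the assumption that it is $Q$-smooth, bounding it from below by an elementary estimate, and obtaining a contradiction for $N$ large enough---is exactly the strategy of the paper, which packages it as a lower bound on $\omega(W(L))$ in Lemma~\ref{lem:div prod} and then compares with $\omega(Q)<2\log Q$. The technical core is the same: the per-prime count $r_p(N)=\sum_k N(N,p^k)$ is controlled by a $N/p$ term (from $N(p^k)$ small for moderate $k$) plus a $d\cdot K_p$ tail (from the Konyagin--Steger estimate, Lemma~\ref{Konyagin2}). Where you diverge is in the treatment of primes dividing $a_d\,\mathrm{disc}(f)$. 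The paper does not distinguish good from bad primes at all: it applies Konyagin's bound $N(\ell^k)\le 2\ell^{k(1-1/d)}$ (Lemma~\ref{Konyagin1}) uniformly to every prime, which only requires the content of $f$ to be coprime to $\ell$---automatic here since $f$ is irreducible over $\Z$. Your good/bad split is therefore a detour; it costs nothing in the "good" case but neither is it needed.

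The genuine problem is the ``cleaner'' reorganization via $Q'=Q\cdot\mathrm{rad}(a_d\,\mathrm{disc}(f))$. Two things go wrong. First, the assertion that after enlarging $Q$ ``the Hensel count applies uniformly'' does not help the contradiction step: under the hypothesis that $\prod f(n)$ is $Q'$-smooth, you must still bound $v_p(\prod f(n))$ for the primes $p\mid Q'$, and those primes include all the bad ones, where Hensel fails. You are forced back to a Konyagin-type bound for those primes, so nothing is gained. Second, even granting that repair, the final bound is necessarily expressed in terms of $\log Q'=\log Q+O(d\log d+d\log^+H)$, and $\bigl(d\log Q'\log^+H\bigr)^d$ and $(\log Q')^{cd^2}$ do \emph{not} reduce to the stated terms: when $Q$ is small and $H$ large, the binomial expansion produces contributions of size roughly $H(d\log^+H)^{2d}$ and $H(d\log^+H)^{cd^2}$, which are not dominated by $C^dH(d\log Q\log^+H)^d+H(\log Q)^{cd^2}$. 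So the claim that ``expanding binomially recovers both terms'' is wrong; the $Q'$ route yields a strictly weaker statement than the theorem.

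Two further points. Your initial choice $N=\lceil K_0d\log Q\log^+H\rceil$ is too small: the contradiction only closes once $N$ also exceeds $(\log Q)^{\Omega(d)}$ (this is precisely the paper's second branch, $t\ge L^{c_2/d}$), so $N$ must be enlarged accordingly before the bound $p\le 2HN^d$ is extracted. You gesture at this but the writeup reads as if the smaller $N$ already suffices for the main term. Finally, the lower bound $\log\prod_{n\le N}|f(n)|\ge (d-o(1))N\log N$ ``valid once $N$ exceeds the integer roots of $f\pm1$'' is not right as stated: $|f(n)|$ can be small for $n$ near a \emph{real or complex} root of $f$, not only at integer roots of $f\pm1$. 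One needs something like the paper's Lemma~\ref{product}, a genuine geometric argument over $\C$, to justify a bound of the form $\prod\max\{1,|f(n)|\}\ge(L/5)^{dL/18}$.
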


We denote the discriminant of $f$ by $\Delta$. Choosing $Q= 3|\Delta|$ we derive the following:

\begin{corollary}\label{single root}
Given an irreducible polynomial $f\in\Z[X]$ of height $H$ and of degree $d\ge 2$, there exists a prime number $p$ satisfying
$$
p\le H(d\log^{+} H)^{O(d^2)},
$$
such that $f$ has a simple root modulo $p$.
\end{corollary}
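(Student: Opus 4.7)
The plan is to apply Theorem~\ref{polynomial} with the choice $Q=3|\Delta|$, where $\Delta$ is the discriminant of $f$. Since $f$ is irreducible of degree $d\ge 2$, one has $\Delta\ne 0$, so $Q\ge 3$ and the hypothesis of Theorem~\ref{polynomial} is met; moreover $p\nmid Q$ forces $p\nmid \Delta$, which in turn means that the reduction of $f$ modulo $p$ is separable. Consequently the root of $f$ modulo $p$ produced by Theorem~\ref{polynomial} is automatically simple, so the whole task reduces to estimating $\log Q$ in terms of $d$ and $H$.

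For the discriminant bound I would write $f=a_d\prod_{i=1}^{d}(X-\alpha_i)$ and start from the identity
$$
|\Delta|=|a_d|^{2d-2}\prod_{i<j}|\alpha_i-\alpha_j|^{2}.
$$
The elementary inequality $|\alpha_i-\alpha_j|\le 2\max(1,|\alpha_i|)\max(1,|\alpha_j|)$ combined with the definition of $M(f)$ yields $|\Delta|\le 2^{d(d-1)}M(f)^{2(d-1)}$, and substituting $M(f)\le H\sqrt{d+1}$ from the right-hand inequality of~\eqref{trecia} gives
$$
\log|\Delta|=O\bigl(d^{2}+d\log^{+}H\bigr).
$$
Since $d\ge 2$ and $\log^{+}H\ge 1$, this can be rewritten as $\log Q=O\bigl((d\log^{+}H)^{2}\bigr)$.

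Substituting the latter into the bound
$$
p\le C^{d}H\bigl(d\log Q\log^{+}H\bigr)^{d}+H(\log Q)^{cd^{2}}
$$
from Theorem~\ref{polynomial}, both terms collapse to $H(d\log^{+}H)^{O(d^{2})}$: indeed $(d\log Q\log^{+}H)^{d}\le (d\log^{+}H)^{O(d)}$, $(\log Q)^{cd^{2}}\le (d\log^{+}H)^{O(d^{2})}$, and the constant factor $C^{d}$ is absorbed into $(d\log^{+}H)^{O(d)}$ because $d\log^{+}H\ge 2$. Since the heavy lifting is already done by Theorem~\ref{polynomial}, the only obstacle is this final bookkeeping; in particular one has to notice that the stray $d\log d$ term arising from $\log(d+1)$ in the discriminant estimate is harmlessly swallowed by $(d\log^{+}H)^{O(d^{2})}$, which follows from $d\log d\le d^{2}$ for $d\ge 2$.
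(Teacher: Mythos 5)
Your proof is correct and follows essentially the same route as the paper: both apply Theorem~\ref{polynomial} with $Q=3|\Delta|$, observe that $p\nmid\Delta$ makes the mod-$p$ root simple, and then bound $\log|\Delta|$ by $O(d^2+d\log^+H)$ so that the final bookkeeping yields $H(d\log^+H)^{O(d^2)}$. The only difference is that the paper cites Mahler's inequality $|\Delta|<d^{2d}H^{2d-2}$ directly, whereas you derive a slightly weaker (but entirely sufficient) bound $|\Delta|\le 2^{d(d-1)}M(f)^{2(d-1)}$ from scratch via the elementary estimate $|\alpha_i-\alpha_j|\le 2\max(1,|\alpha_i|)\max(1,|\alpha_j|)$; this is a self-contained alternative to the citation but does not change the structure of the argument.
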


\begin{remark}
Let $f$ be the $n$-th cyclotomic polynomial with $n>2$. Then, it is well-known that, for a prime $p$, $f$ has a simple root modulo $p$ if and only if
$p\equiv 1 \pmod n$. Linnik's theorem says that such a prime $p$ can be chosen so that $p = O(n^L)$, where $L$ is an absolute constant. A recent result of Xylouris~\cite{Xylouris2011} says that we can choose $L=5.18$.
\end{remark}

\subsection{Products of polynomial values}

 First, we give a lower bound on the product of polynomial values which
 is necessary for our argument and which can be of independent
 interest.

\begin{lemma}\label{product}
Let $f \in \C[x]$ be a polynomial of degree $d \geq 1$, and assume that the absolute value of the leading coefficient of $f$ is not less than 1. Then for each integer
$L \geq 51(2d+1)$, we have
$$
\prod_{j=1}^L \max\{1,|f(j)|\} \geq (L/5)^{dL/18}.
$$
\end{lemma}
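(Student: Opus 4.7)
The plan is to factor $f$ over $\C$, observe that only $O(d)$ integers $j \in \{1,\ldots,L\}$ can lie close enough to a root for $|f(j)|$ to fall below $1$, and then lower-bound the contribution of the remaining integers through a sorted-distance argument.

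Write $f(x) = a(x-\alpha_1)\cdots(x-\alpha_d)$ with $|a| \geq 1$ and $\alpha_i \in \C$. For each root, the open unit disk centred at $\alpha_i$ meets the real line in an interval of length at most $2$, so the set $B_i = \{j \in \{1,\ldots,L\} : |j-\alpha_i| < 1\}$ contains at most two integers. Let $B = B_1 \cup \cdots \cup B_d$; then $|B| \leq 2d$, and for every $j \notin B$ each factor $|j-\alpha_i|$ is at least $1$, hence $|f(j)| \geq |a| \geq 1$. Therefore
$$
\prod_{j=1}^L \max\{1,|f(j)|\} \;\geq\; \prod_{j \notin B} |f(j)| \;\geq\; \prod_{i=1}^{d} \prod_{j \in \{1,\ldots,L\} \setminus B} |j-\alpha_i|.
$$

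The core estimate is the following sorted-distance inequality: if $d_{i,(1)} \leq \cdots \leq d_{i,(L)}$ is the non-decreasing rearrangement of $\{|j-\alpha_i|\}_{j=1}^L$, then $d_{i,(n)} \geq (n-1)/2$. Indeed, the $n$ nearest integers to $\alpha_i$ all lie in a closed disk of radius $d_{i,(n)}$, whose intersection with $\R$ is an interval of length at most $2d_{i,(n)}$ and so contains at most $2d_{i,(n)}+1$ integers. Since every distance $|j-\alpha_i|$ with $j \notin B$ is at least $1$, the $L-|B|$ such distances correspond to sorted positions $n \geq |B_i|+1$; in the worst case the excluded positions are the largest, so, using $d_{i,(n)} \geq \max\{1,(n-1)/2\}$, I obtain
$$
\prod_{j \notin B} |j-\alpha_i| \;\geq\; \prod_{n=3}^{L-2d+2} \frac{n-1}{2} \;=\; \frac{(L-2d+1)!}{2^{L-2d}}.
$$

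Multiplying over the $d$ roots and applying Stirling's inequality $(L-2d+1)! \geq ((L-2d+1)/e)^{L-2d+1}$ yields
$$
\prod_{j=1}^L \max\{1,|f(j)|\} \;\geq\; \left(\frac{L-2d+1}{2e}\right)^{d(L-2d+1)}.
$$
The hypothesis $L \geq 51(2d+1)$ forces $L-2d+1 \geq 50L/51$, so the exponent $d(L-2d+1)$ is at least $\tfrac{50}{51}dL$, which is roughly $17$ times the target exponent $dL/18$. Although the base $(L-2d+1)/(2e)$ is slightly smaller than $L/5$, the enormous gap in the exponents comfortably wins, and a routine arithmetic comparison establishes the desired bound $(L/5)^{dL/18}$. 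The most delicate point is the bookkeeping about which positions of the sorted list are genuinely excluded after removing $B$; the wide margin between the true order of magnitude $(L/(2e))^{dL}$ and the modest claim $(L/5)^{dL/18}$ easily absorbs any slack.
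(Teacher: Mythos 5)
Your proof is correct in its essentials, but it takes a genuinely different route from the paper. The paper uses a geometric covering argument: it places four open discs $D_1,\dots,D_4$ of radius $L/6$ over the segment $[1,L]$, sets $D_5=\C\setminus\bigcup D_j$, and runs a case analysis on whether at least $d/10$ roots escape to $D_5$ (in which case every good integer $j$ is far from all those roots) or some $D_i$ traps a positive proportion of the roots (in which case the good integers in a disc on the opposite side are uniformly far from them). Your argument instead factors $f$, removes the at most $2d$ integers that can come within unit distance of some root, and then lower-bounds each $\prod_{j\notin B}|j-\alpha_i|$ directly by a sorted-distance count plus Stirling. Your route is more elementary (no clever choice of disc centres) and in fact proves a stronger estimate: cleaned up, it yields something of the shape $\bigl((L-2d-1)/(2e)\bigr)^{d(L-2d-1)}$, i.e.\ an exponent of order $dL$ rather than $dL/18$, which is the right order of magnitude (cf.\ the example $f=x^d$ noted after the lemma in the paper).

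Two small points of bookkeeping you should repair. First, the index range $\prod_{n=3}^{L-2d+2}\frac{n-1}{2}$ implicitly assumes $|B_i|=2$; if $|B_i|<2$ the surviving sorted positions shift left and the worst case is actually $\prod_{n=1}^{L-2d}\max\{1,(n-1)/2\}=\frac{(L-2d-1)!}{2^{L-2d-2}}$, which is smaller than your stated bound by a factor of about $(L-2d)^2/4$ — harmless given your margin, but your displayed inequality as written is not a valid lower bound for all configurations. The cleanest fix is to observe that if the surviving positions are $n_1<\cdots<n_{L-|B|}$ then $n_k\ge k$, so $\prod_{j\notin B}|j-\alpha_i|\ge\prod_{k=1}^{L-2d}\max\{1,(k-1)/2\}$ with no case analysis. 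Second, you defer the final comparison to ``routine arithmetic''; since this is the whole point of the lemma it is worth writing out: with $N=L-2d-1\ge L-(2d+1)\ge\frac{50}{51}L$, one gets a base at least $N/(2e)\ge L/5.55$ and exponent $dN\ge\frac{50}{51}dL$, and the inequality $\frac{50}{51}\log(L/5.55)\ge\frac{1}{18}\log(L/5)$ reduces to $L\ge 6$, which is comfortably implied by $L\ge 51(2d+1)\ge 153$.

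Finally, be slightly careful in the step
$\prod_{j\notin B}|f(j)|\ge\prod_i\prod_{j\notin B}|j-\alpha_i|$: this uses $|a|^{L-|B|}\ge1$, which is fine since $|a|\ge1$, but it is worth saying so explicitly since that hypothesis is exactly why the lemma requires the leading coefficient to have absolute value at least $1$.
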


\begin{proof}
Call a point $j \in S=\{1,2,\ldots,L\}$ {\it good} if the distance from $j$ to the nearest root
of $f$ is at least $1$. Then $|f(j)| \ge 1$. Since each open disc of radius $1$ and center at a root of $f$ contains at most two points of the set $S$, there are at least $L-2d$
good points in $S$.

Consider four open discs $D_1,D_2,D_3,D_4$ of radius $L/6$ each, with centers at $L/10$, $11L/30$, $19L/30$, $9L/10$, respectively, and put $D_5:=\C \setminus \bigcup_{j=1}^4 D_j$. It is easy to see that the distance from each point of the set
$S$ to $D_5$ is at least
$$
\min\{L/6-L/10, \sqrt{(L/6)^2-(2L/15)^2}\}=\min\{L/15,L/10\}=L/15.
$$
Now, if at least
$d/10$ roots of $f$ lie in $D_5$, we obtain $|f(j)| \geq (L/15)^{d/10}$
for each good $j \in S$.   Thus, as $L \geq 100d$, we deduce
\begin{align*}
\prod_{j=1}^L \max\{1,|f(j)|\} \ge \prod_{j - \text{good}} |f(j)|
&\geq (L/15)^{(L-2d)d/10} \\
&> (L/15)^{2dL/21}>(L/5)^{dL/17},
\end{align*}
which is stronger than required.

Alternatively, when $D_5$ contains less than $d/10$ roots of $f$, the union
$\bigcup_{j=1}^4 D_j$ must contain at least $0.9d$ roots of $f$. Thus, some $D_i$,
where $i \in \{1,2,3,4\}$, contains at least $0.225d$ roots of $f$.
Now, we put $k=1$ if $i=3$ or $i=4$, and $k=4$ if $i=1$ or $i=2$.
The set $D_k$ contains at least $4L/15-2d-1 \geq 0.247L$ good points of $S$. (Here, we use the bound $L \geq 51(2d+1)$.) The distance between any two points of $D_k$ and $D_i$ is at least $19L/30-L/6-(L/10+L/6)=L/5$.
Consequently, the distance  from each  good point in $D_k$ to $D_i$ is at least $L/5$. Thus,
$$
\prod_{j=1}^L \max\{1,|f(j)|\} \ge \prod_{j - \text{good in} \> D_k} |f(j)|  \geq (L/5)^{0.247L \cdot 0.225d} > (L/5)^{dL/18}.
$$
This completes the proof.
\end{proof}

Note that the lower bound of Lemma~\ref{product} is sharp up to the constants. For instance,
for $f(x)=x^d$, we have
$$
\prod_{j=1}^L \max\{1,|f(j)|\} = L!^d \leq L^{dL}.
$$

\subsection{Polynomial congruences}
\label{eq:poly cong}

For a polynomial $f\in \Z[X]$ of degree $d\ge 1$, and two positive integers $L$ and $q$, we define
$$
N(L,q)=|\{1\le j\le L~:~f(j)\equiv 0  \pmod q\}|,
$$
and $N(q)=N(q,q)$.

Recall that the \emph{content} of a polynomial $f$ is defined as the greatest common divisor of the coefficients of $f$. We also need the following three bounds on $N(L,q)$ when
$q = \ell^k$ is a prime power.

\begin{lemma}\label{Lagrange}
Given a positive integer $k$ and a prime number $\ell$. Suppose that the content of $f$ is coprime to $\ell$, and that $f$ has $m$ distinct zeros over $\C$. Then, we have
$$
N(\ell^{k})\le m\ell^{k-1}.
$$
\end{lemma}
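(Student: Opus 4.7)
My plan is to prove the estimate by induction on $k$: the base case $k=1$ will require a factorization of $f$ in $\Z_\ell[X]$, and the inductive step will follow from a simple lifting count.

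For the base case $N(\ell)\le m$, I would exploit that the hypothesis on the content of $f$ makes $f$ primitive in the UFD $\Z_\ell[X]$. By Gauss's lemma I would factor $f = u\prod_{i=1}^{s}\pi_i^{a_i}$ with $u\in\Z_\ell^{\times}$ and $\pi_1,\ldots,\pi_s\in\Z_\ell[X]$ pairwise non-associate primitive irreducibles. Each $\pi_i$ is then irreducible in $\Q_\ell[X]$ and, since $\mathrm{char}\,\Q_\ell=0$, separable, so it contributes $n_i:=\deg\pi_i$ distinct roots in $\overline{\Q_\ell}$, while distinct $\pi_i$'s share no roots. Any embedding $\overline{\Q}\hookrightarrow\overline{\Q_\ell}$ identifies the sets of roots of $f$ in the two algebraic closures, giving $m=\sum_{i=1}^{s} n_i$.

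Reducing modulo $\ell$ gives $\bar f = \bar u\prod_{i=1}^{s}\bar\pi_i^{a_i}$ in $\F_\ell[X]$, and the primitivity of each $\pi_i$ guarantees $\bar\pi_i\neq 0$. Any root of $\bar f$ in $\F_\ell$ must be a root of some $\bar\pi_i$, and Lagrange's theorem bounds the number of roots of $\bar\pi_i$ in $\F_\ell$ by $\deg\bar\pi_i\le n_i$. Summing over $i$ then yields $N(\ell)\le\sum_{i} n_i = m$. For the inductive step ($k\ge 2$), reduction modulo $\ell^{k-1}$ sends any solution of $f(j)\equiv 0\pmod{\ell^k}$ in $\{1,\ldots,\ell^k\}$ to a solution of $f(y)\equiv 0\pmod{\ell^{k-1}}$ in $\{1,\ldots,\ell^{k-1}\}$, and each such $y$ has exactly $\ell$ lifts modulo $\ell^k$. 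Hence $N(\ell^k)\le\ell\cdot N(\ell^{k-1})\le m\ell^{k-1}$ by the inductive hypothesis.

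The main obstacle is the base case: one must translate the hypothesis ``$f$ has $m$ distinct complex roots'' into information about the $\ell$-adic factorization of $f$. The essential ingredient here is Gauss's lemma applied in the UFD $\Z_\ell[X]$, which preserves primitivity under factorization and thereby ensures each $\bar\pi_i$ is a nonzero polynomial in $\F_\ell[X]$ to which Lagrange's theorem can be applied; once this is in hand, the rest of the argument is essentially bookkeeping.
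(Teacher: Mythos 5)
The paper gives no proof of this lemma at all — it is dismissed in a single sentence as ``well-known and also trivial'' — so there is nothing in the text to compare against; the only question is whether your argument is sound, and it is. Your decomposition into a base case $N(\ell)\le m$ plus the elementary lifting bound $N(\ell^k)\le\ell\,N(\ell^{k-1})$ is exactly the natural structure, and both halves are carried out correctly.

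The one remark worth making is that your route to the base case is heavier than it needs to be. Rather than factoring in $\Z_\ell[X]$ (which invokes that $\Z_\ell[X]$ is a UFD, Gauss's lemma over $\Z_\ell$, and an embedding $\overline{\Q}\hookrightarrow\overline{\Q_\ell}$ to transport the count of distinct roots), one can factor $f$ directly over $\Z$: write $f$ as its content $c$ times a product $g_1^{e_1}\cdots g_r^{e_r}$ of powers of pairwise non-associate primitive irreducibles $g_i\in\Z[X]$. Each $g_i$ is separable (characteristic $0$) and distinct $g_i$'s share no complex roots, so $m=\sum_i\deg g_i$; since $\gcd(c,\ell)=1$ and each $g_i$ is primitive, $\bar c$ and every $\bar g_i$ are nonzero in $\F_\ell[X]$, and Lagrange applied to each $\bar g_i$ yields $N(\ell)\le\sum_i\deg\bar g_i\le\sum_i\deg g_i=m$. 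This is the same argument in substance but stays over $\Z$ and avoids any appeal to $p$-adic completions. Either way, your proof is correct.
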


\begin{lemma}\label{Konyagin1}
Given a positive integer $k$ and a prime number $\ell$. Suppose that the content of $f$ is coprime to $\ell$. Then, we have
$$
N(\ell^{k})\le 2\ell^{k(1-1/d)}.
$$
\end{lemma}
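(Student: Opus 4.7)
The plan is to follow Konyagin's classical approach, which combines Lemma~\ref{Lagrange} with a Hensel-style lifting argument and an optimal choice of splitting scale. First, I would dispose of the easy regime $\ell\ge d^{d/(d-1)}$: here Lemma~\ref{Lagrange} already gives $N(\ell^{k})\le d\,\ell^{k-1}\le \ell^{k(1-1/d)}$, which is stronger than required. Thus I may restrict attention to primes $\ell$ bounded in terms of $d$, and to $k$ large enough that $\ell^{k(1-1/d)}\ge 2$; the remaining finite set of small cases can be absorbed into the constant $2$ by direct inspection.

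For the inductive step on $k$, I would fix an integer $s$ with $1\le s<k$ (to be optimized later) and group the solutions $j\in\{0,\ldots,\ell^{k}-1\}$ by their residue $j_{0}=j\bmod \ell^{s}$. For fixed $j_{0}$, writing $j=j_{0}+\ell^{s}j_{1}$ with $0\le j_{1}<\ell^{k-s}$, the $\ell$-adic Taylor expansion
\[
f(j_{0}+\ell^{s}j_{1})=\sum_{i=0}^{d}\frac{f^{(i)}(j_{0})}{i!}\,\ell^{si}\,j_{1}^{i},
\]
whose coefficients are integers since $f(j_{0}+y)\in\Z[y]$, rewrites the congruence $f(j)\equiv 0\pmod{\ell^{k}}$ as a polynomial condition on $j_{1}$. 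Setting $v=\min_{1\le i\le d}\bigl(v_{\ell}(f^{(i)}(j_{0})/i!)+si\bigr)$, one factors out $\ell^{v}$ and obtains a new congruence $h(j_{1})\equiv 0\pmod{\ell^{k-v}}$ with $h\in\Z[X]$ of degree at most $d$ and content coprime to $\ell$, to which either the inductive hypothesis or Lemma~\ref{Lagrange} applies. The number of admissible $j_{0}$ modulo $\ell^{s}$ is in turn controlled by the induction hypothesis at level $s<k$.

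The balance between the number of residues $j_{0}$ and the number of lifts $j_{1}$ is achieved by taking $s=\lfloor k(1-1/d)\rfloor$, which equalizes the two contributions up to constants. The main obstacle I anticipate is keeping the constant as low as $2$: a crude induction produces a constant that grows with $k$, and to reduce it to an absolute $2$ one must exploit the trade-off between the two sub-cases (where $v$ is large, so lifting is essentially free, versus where $v$ is small, so the number of admissible $j_{0}$ must itself be constrained) with some care. A convenient way to make this rigorous is to induct on the degree $d$ rather than on $k$, exploiting the fact that, after factoring out $\ell^{v}$, the polynomial $h$ still has degree at most $d$ but typically fewer distinct roots modulo $\ell$, so that the recursion eventually terminates with a uniform constant. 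Verifying that the constant never exceeds $2$ across all cases (including the transitional regime where $k$ is of comparable size to $d$) is the delicate computational point of the argument.
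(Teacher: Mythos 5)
The paper does not actually prove this lemma; it is taken directly from Konyagin's 1979 paper (Lemma~2 there), so there is no in-paper argument to compare against beyond the citation. Your plan (group residues modulo $\ell^s$, Taylor-expand $f(j_0+\ell^s j_1)$, induct on the reduced congruence) is the right \emph{kind} of argument, but there is a concrete error in the opening reduction. You assert that for $\ell\ge d^{d/(d-1)}$ Lemma~\ref{Lagrange} already suffices, i.e.\ that $d\ell^{k-1}\le \ell^{k(1-1/d)}$. This inequality is equivalent to $d\le \ell^{1-k/d}$. For $k=1$ that is precisely your hypothesis on $\ell$, but for $k\ge2$ it requires $\ell\ge d^{d/(d-k)}$, a threshold that blows up as $k\to d$; and for $k\ge d$ the exponent $1-k/d$ is nonpositive, so even $d\ell^{k-1}\le 2\ell^{k(1-1/d)}$ forces $d\le 2$ and is false for every $d\ge3$ regardless of $\ell$. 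In short, for $k$ of size comparable to $d$ or larger, the Lagrange bound $d\ell^{k-1}$ is strictly \emph{weaker} than the target $2\ell^{k(1-1/d)}$ for every prime $\ell$, so you cannot restrict attention to $\ell$ bounded in terms of $d$ in the way you propose; the lifting argument is needed in the large-$\ell$ regime as well.

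In the main step the outline leaves the essential cases open. If $v_\ell(f(j_0))<\min_{1\le i\le d}\bigl(v_\ell(f^{(i)}(j_0)/i!)+si\bigr)$, then ``factoring out $\ell^v$'' with $v$ defined over $i\ge1$ produces a polynomial with a non-integral constant term; for such $j_0$ either every $j_1$ solves the congruence (when $v_\ell(f(j_0))\ge k$) or none does, and this set of $j_0$ has to be counted separately---this is precisely where the subtlety lies. You also need to verify that the reduced polynomial $h$ still has content coprime to $\ell$ before the hypothesis can be reapplied, which is not automatic. Finally, as you concede, getting the absolute constant $2$ uniformly in $k$ and $d$ is the hard part, and it is not carried out. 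So the approach is sensible and in the spirit of Konyagin's method, but the ``easy regime'' reduction is false as stated and the induction is not yet a proof.
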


\begin{lemma}\label{Konyagin2}
Given positive integers $L,k$, and a prime number $\ell$, we have
$$
|N(L,\ell^{k})-\frac{L}{\ell^{k}}N(\ell^{k})|< d.
$$
\end{lemma}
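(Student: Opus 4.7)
My plan is to use a periodicity reduction followed by induction on $k$, with the main content being a discrepancy estimate for the solution set modulo $\ell^k$. Setting $q = \ell^k$ and writing $L = aq + r$ with $0 \le r < q$, the fact that $f(j) \equiv 0 \pmod q$ depends only on $j \bmod q$ gives $N(L,q) = aN(q) + N(r,q)$, hence
\begin{equation*}
N(L,q) - \frac{L}{q}N(q) = N(r,q) - \frac{r}{q}N(q).
\end{equation*}
It therefore suffices to prove $|N(r,q) - (r/q)N(q)| < d$ whenever $0 \le r < q$, which is a discrepancy bound on the solution set $S = \{s \in \{1,\dots,q\} : f(s) \equiv 0 \pmod q\}$ in the initial interval $\{1,\dots,r\}$.

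For this I would induct on $k$. The base case $k = 1$ is routine: after dividing out the largest power of $\ell$ in the content of $f$, we may assume the content is coprime to $\ell$, so that $|S| \le d$ (a nonzero polynomial of degree at most $d$ has at most $d$ roots modulo a prime). Bounding both $N(r,\ell)$ and $(r/\ell)|S|$ by $\min(r,|S|)$ and splitting on whether $r \le |S|$ or $r > |S|$ then yields $|N(r,\ell) - (r/\ell)|S|| < d$ strictly. For the inductive step with $k \ge 2$, I would partition $S$ by residue modulo $\ell$: every $s \in S$ satisfies $f(s) \equiv 0 \pmod \ell$, so $s$ lies in one of $m \le d$ residue classes $t_1,\dots,t_m$. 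Substituting $s = t+\ell u$ yields $h_t(u) := f(t+\ell u)/\ell \in \Z[u]$ of degree $d$, and the congruence $f(s) \equiv 0 \pmod{\ell^k}$ becomes $h_t(u) \equiv 0 \pmod{\ell^{k-1}}$; the inductive hypothesis applied to each $h_t$ then bounds the discrepancy within each residue class.

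Writing $r = a\ell+b$ with $0 \le b < \ell$, the count $|S \cap \{1,\dots,r\}|$ splits into contributions from each residue class, where each contribution counts solutions of $h_t$ in a sub-interval of integer length $a$ or $a+1$. After combining the pieces, the total discrepancy becomes the sum of a ``quantization'' term arising because the sub-interval lengths are $a$ or $a+1$ rather than exactly $r/\ell$ (which I can bound strictly below $d$ using Lemma~\ref{Lagrange} together with the inequality $|S|/\ell^{k-1} \le d$), together with $\sum_t E_t$, where each $|E_t| < d$ is the inductive error coming from the $t$-th residue class.

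The main obstacle lies in this last sum. A naive triangle inequality only gives $\sum_t |E_t| < md \le d^2$, which is far from the required $<d$. Overcoming this is, I expect, the crux of the lemma: one needs either a stronger inductive invariant bounding $|E_t|$ by a finer quantity (for example, the number of distinct roots of $h_t \bmod \ell$) that telescopes across the residue classes to a global bound of order $d$, or a genuine cancellation among the $E_t$ coming from their common origin in the single polynomial $f$. Without such a refinement the method yields only a weaker bound of order $d^2$, and isolating the right invariant to get the sharp $<d$ is the central difficulty.
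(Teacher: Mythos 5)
This lemma is not proved in the paper: the authors explicitly state that it ``follows directly from'' Theorem~1 of Konyagin and Steger (1994), so there is no in-paper argument against which to compare. The question is therefore simply whether your proposal is itself a proof, and it is not.

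Your reduction from general $L$ to $L = r$ with $0 \le r < \ell^k$ via periodicity is correct, and the base case $k=1$ is sound: once the $\ell$-part of the content is removed, $f \bmod \ell$ is a nonzero polynomial of degree at most $d$ over $\F_\ell$, hence has $s \le d$ roots, and the discrepancy of an $s$-element subset of $\{1,\dots,\ell\}$ against an initial segment of length $r < \ell$ is strictly less than $s \le d$. The trouble is in the inductive step, and you identify it yourself: decomposing the solution set by residue class modulo $\ell$ and applying the inductive hypothesis to each of the $m \le d$ lifted polynomials $h_t$ only gives, via the triangle inequality, an error of order $md \le d^2$, not $< d$. You correctly flag that closing this gap requires either a stronger inductive invariant (for instance one tied to the multiplicities $\mu_t$ of the roots $t$ modulo $\ell$, which satisfy $\sum_t \mu_t \le d$, rather than to the full degree $d$ of each $h_t$) or genuine cancellation among the per-class errors $E_t$. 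That refinement is precisely the substance of the Konyagin--Steger theorem; without it the argument only yields $|N(L,\ell^k) - (L/\ell^k)N(\ell^k)| = O(d^2)$. As written, then, the proposal is a plausible strategy with the central difficulty explicitly left unresolved, and does not constitute a proof of the stated lemma.
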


Lemma~\ref{Lagrange} is well-known and also trivial, Lemmas~\ref{Konyagin1} and~\ref{Konyagin2} follow directly from~\cite[Lemma~2]{Konyagin1979} and~\cite[Theorem~1]{Konyagin1994}, respectively.

\subsection{Prime divisors of polynomial products}
\label{eq:poly prod}

The following uniform lower bound on the number of prime divisor is
one of  our main technical tools but may also be of independent interest. 

As usual, let $\omega(k)$ denote the
number of  distinct prime divisors of an integer $k\ge 1$.

\begin{lemma}\label{lem:div prod}
There are absolute  constants $c_1,c_2>0$
such that for any polynomial $f(X) \in \Z[X]$ of degree $d\ge 1$ and 
of height $H$, for each integer
$L \geq 2d+1$,  for the product
$$
W(L) =\prod_{j=1}^L \max\{1,|f(j)|\} 
$$
we have 
$$
\omega\(W(L)\) \ge \min\left\{\frac{c_1L}{\log^+H}, L^{c_2/d}\right\}. 
$$
\end{lemma}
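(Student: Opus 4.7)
My plan is to combine the lower bound $W(L) \ge (L/5)^{dL/18}$ from Lemma~\ref{product} (which applies for $L \ge 51(2d+1)$; the finite range $2d+1 \le L < 51(2d+1)$ is absorbed into the absolute constants $c_1,c_2$) with upper bounds on the $\ell$-adic valuations $v_\ell(W(L))$ derived from Lemmas~\ref{Konyagin1} and~\ref{Konyagin2}. I first reduce to the case that $f$ is primitive; this changes $\omega(W(L))$ only in favorable directions. Setting $M = \max_{1 \le j \le L}|f(j)| \le (d+1)HL^d$, the combination of Lemmas~\ref{Konyagin1} and~\ref{Konyagin2} yields $N(L,\ell^k) \le 2L\ell^{-k/d} + d$ for every $k \ge 1$, and summing over $k \le \lfloor \log_\ell M \rfloor$ gives the key estimate
$$
v_\ell(W(L)) \log \ell \;\le\; \frac{2L \log \ell}{\ell^{1/d} - 1} + d \log M \;\le\; 2Ld + d \log M,
$$
using the elementary inequality $\ell^{1/d} - 1 \ge (\log \ell)/d$. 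For primes $\ell > L$, at most $d$ of the integers in $[1,L]$ are zeros of $f$ modulo $\ell$, and each such $j$ contributes at most $\log M /\log \ell$ to $v_\ell(W(L))$, yielding the sharper bound $v_\ell(W(L)) \log \ell \le d \log M$.

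\textbf{Putting the pieces together.} Let $t = \omega(W(L))$, and split $t = t_1 + t_2$ according to whether a prime divisor of $W(L)$ is $\le L$ or $> L$. Summing the two valuation bounds over all prime divisors,
$$
\log W(L) \;\le\; 2Ld \, t_1 + dt \log M.
$$
Comparing with $\log W(L) \ge (dL/18) \log(L/5)$ forces either $t_1 \ge c \log L$ or $t \ge c L\log L/\log M$. Unpacking $\log M \le \log(d+1) + \log H + d\log L$, the second alternative gives $t \ge c_1 L/\log^+ H$ in the regime in which $\log^+ H$ is the dominant scale, producing the first part of the claimed lower bound. To capture the competing $L^{c_2/d}$ bound, I would apply a smoothness-counting argument: every non-zero value $|f(j)|$ lies in the set of positive integers $\le M$ whose prime factors belong to $\{\ell_1,\ldots,\ell_t\}$, a set of cardinality at most $\prod_{i=1}^t(1 + \log_{p_i} M)$, where $p_i$ denotes the $i$-th prime and we use $\ell_i \ge p_i$. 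Since each value of $|f(j)|$ is attained by at most $2d$ integers $j\in[1,L]$ and $f$ has at most $d$ integer zeros,
$$
L \;\le\; 3d + 2d\prod_{i=1}^t\!\left(1+\frac{\log M}{\log p_i}\right),
$$
and taking logarithms while exploiting $\sum_{i\le t}\log\log p_i \gg t\log\log t$ (from Mertens-type estimates for the primorial) yields a lower bound $t \ge L^{c_2/d}$ after calibrating $c_2$.

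\textbf{Main obstacle.} The delicate step is reconciling the two regimes and ensuring the constants $c_1,c_2$ are absolute. The $\ell$-adic valuation argument dominates when $H$ or $d$ is large relative to $L$, while the smoothness count dominates when $L$ is very large compared to $H$ and $d$. Pinning down absolute constants requires careful bookkeeping of the contributions of small primes $(\ell \le L)$ versus large primes $(\ell > L)$ in the first argument, and sharpening the smoothness count by using the primorial-style lower bound $\prod_{i=1}^t \log p_i \gg (\log t)^t$ in the second. The interplay between these two estimates in the intermediate regime $L \approx H \cdot e^d$ is the main technical hurdle.
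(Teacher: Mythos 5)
Your first piece (bounding $v_\ell(W(L))$ via Lemmas~\ref{Konyagin1} and~\ref{Konyagin2} and comparing with Lemma~\ref{product}) is essentially the paper's strategy, but you omit Lemma~\ref{Lagrange}, and that omission is what ultimately breaks the argument. Using only $N(\ell^k)\le 2\ell^{k(1-1/d)}$ (for all $k$) you obtain $v_\ell(W(L))\log\ell\le 2Ld+d\log M$, which has no decay in $\ell$. The paper instead uses the Lagrange bound $N(\ell^k)\le d\ell^{k-1}$ for $k\le d$, which yields $v_\ell(W(L))\le (d+3)dL/\ell+dK_\ell(L)$ — note the crucial $1/\ell$ factor. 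After summing over $\ell\in\cL$, that factor converts $\sum_{\ell\in\cL}\log\ell/\ell=O(\log t)$ (Mertens) into a term $\exp(O(d^2L\log t))$, which is \emph{logarithmic} in $t$; your version produces $2Ld\,t_1$, which is \emph{linear} in the number of small prime divisors. As a result your dichotomy only gives $t_1\ge c\log L$ or $t\ge cL\log L/\log M$; the first alternative is far too weak, whereas in the paper the analogous alternative already forces $\log t\gtrsim (\log L)/d$, i.e.\ $t\ge L^{c_2/d}$.

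The smoothness-counting argument you invoke to repair the weak alternative does not deliver the claimed bound $t\ge L^{c_2/d}$. From $L\le 3d+2d\prod_{i=1}^t\bigl(1+\log M/\log p_i\bigr)$ and $\log(1+\log M/\log p_i)\le\log(2\log M)$ one gets only
$$
t\;\ge\;\frac{\log\bigl(L/(6d)\bigr)}{\log(2\log M)}\;\asymp\;\frac{\log L}{\log d+\log\log L+\log\log^{+}H},
$$
and the refinement $\sum_{i\le t}\log\log p_i\gg t\log\log t$ cannot close the gap: for $t$ in the range $t\lesssim\exp\bigl((\log M)^{1/c}\bigr)$ the subtracted term $ct\log\log t$ is dominated by $t\log(2\log M)$, so the conclusion is still $t\gtrsim\log L/\log\log L$. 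This is sub-polynomial in $L$, while $L^{c_2/d}$ is polynomial, so for large $L$ and bounded $d$, $H$ the smoothness count falls far short. (Intuitively this is forced: $\Psi(M,p_t)\ge \lfloor\log_2 M\rfloor+1\gtrsim d\log L$ already with $t=1$, so the count $L-d\le 2d\Psi(M,p_t)$ simply cannot rule out small $t$ at the polynomial scale.) To fix the proof you should replace the smoothness argument by the paper's route: invoke Lemma~\ref{Lagrange} for $k\le d$, carry the resulting $1/\ell$ through, bound $\sum_{\ell\in\cL}\log\ell/\ell$ by $O(\log t)$, and compare the two resulting exponential factors against $(L/5)^{dL/18}$.

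Two minor points that are fine: your reduction to primitive $f$ is legitimate (if $f=cg$ with $g$ primitive, $W_f(L)$ is a multiple of $W_g(L)$ and $H(f)\ge H(g)$, so the inequality for $g$ implies it for $f$), and your treatment of the large primes $\ell>L$, the bound $|f(j)|\le(d+1)HL^d$, and the absorption of the range $2d+1\le L<51(2d+1)$ into the constants all match the paper.
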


\begin{proof} Let $t=\omega(W(L))$ be the number of distinct prime divisors of $W(L)$.
Since $L \geq 2d+1$, we obviously have $W(L) \ge 2$, so we also have $t \ge 1$. Thus, 
adjusting the constant $c_1$ we can assume that $L \ge 51(2d+1)$.

For a prime $\ell$, we define $r_\ell(L)$ by
$$
\ell^{r_\ell(L)}\|W(L).
$$
Then, we have
$$
r_\ell(L)=\sum\limits_{k=1}^{K_\ell(L)}N(L,\ell^k),
$$
where $N(L,\ell^k)$ is as in Section~\ref{eq:poly cong} and
$$
K_\ell(L)=\max\{r~:~\exists~ 1\le j\le L,\ \ell^r\mid f(j), f(j)\ne 0\}.
$$
Clearly, $|f(j)|\le 2HL^d$ for $1\le j\le L$. Therefore,
\begin{equation}
\label{eq:bound K}
K_\ell(L)\le \log(2HL^d)/\log \ell.
\end{equation}

We use Lemma~\ref{Lagrange} for $k\le d$ and Lemma~\ref{Konyagin1} for $k>d$. Furthermore, from Lemma~\ref{Konyagin2}, we find that
\begin{align*}
r_\ell(L)&\le L\sum\limits_{k=1}^{K_\ell(L)}\frac{N(\ell^k)}{\ell^k}+dK_\ell(L)\\
&\le L\sum\limits_{k=1}^{d}\frac{d}{\ell}+
L\sum\limits_{k=d+1}^{\infty}2\ell^{-k/d}+dK_\ell(L)\\
&=d^2L\ell^{-1}+\frac{2L\ell^{-1}}{\ell^{1/d}-1}+dK_\ell(L).
\end{align*}
Notice that, since $\log x\le x-1$ for $x>0$, we have
$$
\frac{1}{\ell^{1/d}-1}\le \frac{d}{\log \ell}.
$$
Then,
\begin{align*}
r_\ell(L)&\le d^2L\ell^{-1}+2dL\ell^{-1}(\log \ell)^{-1}+dK_\ell(L)\\
&<(d+3)dL\ell^{-1}+dK_\ell(L).
\end{align*}
Therefore, recalling~\eqref{eq:bound K}, we obtain
$$
\ell^{r_\ell(L)}\le (2HL^d)^d\exp\((d+3)dL\frac{\log \ell}{\ell}\).
$$

Let  $\cL$ be the set of distinct prime divisors of $W(L)$. Then, we have
$$
|W(L)|\le (2HL^d)^{dt}\exp\((d+3)dL
\sum\limits_{\ell\in \cL}\frac{\log \ell}{\ell}\).
$$
Notice that
$$
\sum\limits_{\ell\in \cL}\frac{\log \ell}{\ell} = O\(\log t\),
$$
 because it is bounded by the sum over the first $t$ primes.
Hence,
\begin{equation}\label{prelim}
|W(L)|\le (2HL^d)^{dt}\exp(O(d^2L\log t)).
\end{equation}

Denoting by $T_1$ and $T_2$ the two terms in the product
on the right hand side of~\eqref{prelim} (so that $|W(L)| \le T_1T_2$),
we see that at least one of the inequalities
$|W(L)| \le T_1^2$ or $|W(L)| \le T_2^2$ holds.
More precisely, we have
\begin{equation}\label{upper1}
|W(L)|\le (2HL^d)^{2dt},
\end{equation}
or
\begin{equation}\label{upper2}
|W(L)| = \exp(O(d^2L\log t)).
\end{equation}

On the other hand, by Lemma~\ref{product}, if $L\ge 51(2d+1)$, we have
\begin{equation}\label{lower}
|W(L)|\ge (L/5)^{dL/18}.
\end{equation}

If~\eqref{upper1} holds, then
comparing~\eqref{upper1} and~\eqref{lower}, we find that
$$
t\ge \frac{c_1L}{d \log^{+} H},
$$
where $c_1$ is some absolute constant.

Alternatively, if~\eqref{upper2} holds, then
applying the same argument, but using~\eqref{upper2} and~\eqref{lower}, we obtain
$$
t\ge L^{c_2/d},
$$
where $c_2$ is an absolute constant. This completes the proof.
\end{proof}

\subsection{Proofs}\label{proofs}

\begin{proof}[Proof of Theorem~\ref{generic}]
If $\gcd(a_0,M)=1$ and $|a_0|>1$, then we pick a prime divisor $p$ of $a_0$. Then, $0$ a simple root of $f$ modulo $p$, and, clearly, $p\le H$.

Suppose $|a_0|=1$. Compute $f(\pm iM)$, $0\le i\le d$. Then, there exists at least one $i_0$ such that $|f(i_0M)|\ne 1$ or $|f(-i_0M)|\ne 1$. Assume $|f(i_0M)|\ne 1$ without loss of generality. Pick a prime divisor $p$ of $f(i_0M)$. Since $p\nmid M$, $i_0M$ is exactly a simple root of $f$ modulo $p$. So, $p\le 2H(dM)^d$.

Finally, suppose that $m=\gcd(a_0,M)>1$. Compute $f(\pm ia_0M)$, $0\le i\le d$. Then, there exists at least one $i_0$ such that $|f(i_0a_0M)|\ne |a_0|$ or $|f(-i_0a_0M)|\ne |a_0|$. Assume $|f(i_0a_0M)|\ne |a_0|$ without loss of generality. Pick a prime divisor $p$ of $f(i_0a_0M)/a_0$. Since $p\nmid M$, $i_0a_0M$ is exactly a simple root of $f$ modulo $p$. So, $p\le 2H(dHM)^d$.
\end{proof}

\begin{proof}[Proof of Theorem~\ref{polynomial}]
First, we note that for the irreducible polynomial $f$ we consider, 
since the units of $\Z[X]$ are exactly $\pm 1$, the content of $f$  is 1.

Let  $s = \omega(Q)$.
Clearly,
$$
s\le \frac{\log Q}{\log 2}< 2\log Q.
$$

Let $W(L)$ be the product of Lemma~\ref{lem:div prod}
and let $t=\omega(W(L))$ be the number of distinct prime divisors of $W(L)$.

Our goal is to show that for some sufficiently small $L$ we have
\begin{equation}\label{omegas}
s < t,
\end{equation}
which in turn immediately yields the bound
\begin{equation}\label{bound p}
p\le \max\{|f(j)|~:~ j=1, \ldots, L\}
\end{equation}
on the desired prime $p$.

By Lemma~\ref{lem:div prod} we either have
\begin{equation}\label{omega1}
t\ge \frac{c_1L}{d \log^{+} H},
\end{equation}
or 
\begin{equation}\label{omega2}
t\ge L^{c_2/d},
\end{equation}

If~\eqref{omega1} holds then 
it is sufficient to require that the inequality
\begin{equation}\label{L1}
L\ge c_3d  \log Q\log^{+} H
\end{equation}
for some absolute   constant $c_3>0$.

If~\eqref{omega2} holds then 
it suffices to require that
 \begin{equation}\label{L2}
L\ge (\log Q)^{c_4d}
\end{equation}
for some absolute   constant $c_4$.

Finally, comparing~\eqref{L1} with~\eqref{L2}, we choose
$$
L= \rf{C_0d \log Q\log^{+} H  + (\log Q)^{c_0d}},
$$
where $c_0$ and $C_0$ are some sufficiently
large absolute constants.
Now, from~\eqref{bound p} it is easy to see
that we can choose a prime
$$
p \le 2HL^d \le   C^dH\(d\log Q\log^{+} H \)^d+
H (\log Q)^{c d^2}
$$
for some absolute constants $c$ and $C$,
such that $f$ has a root modulo $p$ and $p\nmid Q$.
\end{proof}

\begin{proof}[Proof of Corollary~\ref{single root}] We recall
that, by \cite[Theorem~1]{Mahler1964} and~\eqref{trecia},
the discriminant $\Delta$ of $f$ satisfies
\begin{equation}\label{discriminant}
|\Delta|<d^{2d}H^{2d-2}.
\end{equation}
The required result now follows from Theorem~\ref{polynomial}.
\end{proof}

\section{Explicit form of Cassels' $p$-adic embedding theorem}\label{embedding0}

\subsection{Arbitrary number fields}\label{arbitrary}

Let $K$ be a number field of degree $d\ge 2$, and let $\beta_1,\ldots,\beta_n$ be some fixed non-zero elements of $K$. By Theorem~\ref{Cassels}, there exist infinitely many primes $p$ such that there is an embedding
\begin{equation}\label{embedding1}
\sigma: K\hookrightarrow \Q_p
\end{equation}
for which
$$
|\sigma(\beta_i)|_p=1, \quad \textrm{for $1\le i\le n$}.
$$
In order to prove Theorem~\ref{Cassels1}, we
 derive an upper bound for such a prime $p$.

First, we assume that $K=\Q(\alpha)$, and that the minimal polynomial of $\alpha$ over $\Z$ is $f$.
Put $$S=\{\beta_1,\ldots,\beta_n, \beta_{n+1}, \ldots,\beta_{2n}\},$$ where $\beta_{n+i}=\beta_i^{-1}$ for $1\le i\le n$. So, in order to ensure that $|\sigma(\beta_i)|_p=1$ for $1\le i\le n$, we only need to ensure that $|\sigma(\beta_i)|_p\le 1$ for $1\le i\le 2n$.

Note that every $\beta_i$, $1\le i\le 2n$, can be expressed uniquely by
$$
\beta_i=\frac{1}{b_i}(a_{i,0}+a_{i,1}\alpha+\cdots+a_{i,d-1}\alpha^{d-1}),
$$
where $b_i,a_{i,0},\ldots,a_{i,d-1}\in \Z$, $b_i\ge 1$, and $\gcd(a_{i,0},\ldots,a_{i,d-1})=1$. Moreover, for $1\le i\le 2n$, applying Corollary~\ref{coefficient}, we find that
\begin{equation}\label{coefficient1}
\log b_i
\le \max_{0\le j\le d-1}\h(a_{i,j}/b_i)
< d\h(\beta_i)+3d^2\h(\alpha)+ 2d^2.
\end{equation}

We claim that a prime $p$ satisfies~\eqref{embedding1} if it satisfies the following three conditions:
\begin{enumerate}
\item[{\bf A.}] $f(a)\equiv 0 \pmod p$ for some $a\in\Z$,

\item[{\bf B.}] $\Delta\not\equiv 0 \pmod p$, where $\Delta$ is
the discriminant of $f$,

\item[{\bf C.}] $b_i\not\equiv 0 \pmod p$, for $1\le i\le 2n$.
\end{enumerate}
Indeed, if $f$ satisfies Conditions~{\bf A} and~{\bf B}, then, by Hensel's lemma, there exists an element $\eta\in \Z_p$ such that $f(\eta)=0$, where $\Z_p$ denotes the set of $p$-adic integers. Then, we define an embedding $\sigma: K\to \Q_p$, by setting $\sigma(\alpha)=\eta$.
Under Condition~{\bf C}, we can see that $|\sigma(\beta_i)|_p\le 1$ for $1\le i\le 2n$.

Therefore, to get an upper bound for such smallest prime $p$ satisfying~\eqref{embedding1}, we can use Theorem \ref{polynomial} directly with $Q= 3|\Delta| b_1\cdots b_{2n}$, by applying \eqref{discriminant} and \eqref{coefficient1}. It follows that we can pick a prime $p$ satisfying \eqref{embedding1} and such that
$$
p\le H\left(dn\h(\alpha)+d\sum\limits_{i=1}^{n}\h(\beta_i)+d\log^{+} H+dn\right)^{O(d^2)},
$$
where $H=H(f)$ is the height of $f$.

In addition, by~\eqref{trecia}, we find that $H\le 2^d\exp(d\h(\alpha))$. So, we obtain
$$
p\le H\left(dn\h(\alpha)+d\sum\limits_{i=1}^{n}\h(\beta_i)+dn\right)^{O(d^2)},
$$
and
\begin{equation}\label{p1}
p\le \exp(d\h(\alpha))\left(dn\h(\alpha)+d\sum\limits_{i=1}^{n}\h(\beta_i)+dn\right)^{O(d^2)}.
\end{equation}

\begin{proof}[Proof of Theorem~\ref{Cassels1}] Since $K$ is generated by $\alpha_1,\ldots,\alpha_m\in K\setminus\Q$ over $\Q$, by Theorem~\ref{primitive}, there exists an algebraic number $\alpha$ such that $K=\Q(\alpha)$ and
$$
\h(\al)\leq  \log (dm)+\h(\alpha_1)+\cdots+\h(\alpha_m).
$$
Thus,
$$
\exp(d\h(\alpha)) \le (dm)^d \exp\(d\sum_{i=1}^m \h(\alpha_i)\)
$$
and
\begin{align*}
& dn\h(\alpha)+d\sum\limits_{i=1}^{n}\h(\beta_i)+dn \\
&\le d\(n \sum\limits_{i=1}^{m}\h(\al_i)+   \sum\limits_{i=1}^{n}\h(\beta_i)+n\log (dm)+n\) \\
& = O\(\(n \sum\limits_{i=1}^{m}\h(\al_i)+   \sum\limits_{i=1}^{n}\h(\beta_i)+n\log^{+} m\) d\log d\).
\end{align*}
Combining these two inequalities with~\eqref{p1}, we see that $p$
satisfies the inequality
\begin{align*}
p\le m^d\exp&\(d\sum\limits_{i=1}^{m}\h(\alpha_i)\)\\
& \left(n\sum\limits_{i=1}^{m}\h(\alpha_i)
+\sum\limits_{i=1}^{n}\h(\beta_i)
+n\log^{+}m\right)^{O(d^2)} d^{O(d^2)},
\end{align*}
which concludes the proof.
\end{proof}

\begin{proof}[Proof of Corollary~\ref{Cassels11}] It is easy to see that the result follows directly from
Theorem~\ref{Cassels1}.
\end{proof}

\begin{proof}[Proof of Corollary~\ref{Cassels12}] We only need to notice that for the fixed algebraic integers (respectively, units)  $\beta_1,\ldots,\beta_n\in\Z[\alpha]$, $b_i=1$ for $1\le i\le n$ (respectively, $1\le i\le 2n$). Then, the result follows directly from Corollary \ref{single root} and \eqref{trecia}.
\end{proof}

\begin{proof}[Proof of Corollary~\ref{Cassels13}] Since $K$ has at least one real embedding, by~\cite[Theorem~1.2]{Vaaler2013}, there exists an element $\alpha$ of $K$ such that $K=\Q(\alpha)$ and
$$
\h(\alpha)\le\frac{\log|D_K|}{2d}.
$$
Notice that $|D_K|\ge 7.25^d$ when $d\ge 16$, see~\cite[Section~2]{Odlyzko1975}.  Then, the desired result follows from~\eqref{p1}.
\end{proof}

\subsection{Cyclotomic fields}\label{cyclotomic0}

In this section, we consider the special case when $K$ is the $m$-th cyclotomic field with $m>2$, namely, $K=\Q(\zeta_m)$, where $\zeta_m$ is an $m$-th primitive root of unity.
Fix some non-zero elements $\beta_1,\ldots,\beta_n$ of $K$. We want to get an upper bound for the smallest prime $p$ such that there is an embedding
\begin{equation}\label{embedding2}
\sigma: K\hookrightarrow \Q_p
\end{equation}
for which
$$
|\sigma(\beta_i)|_p=1, \quad \textrm{for $1\le i\le n$}.
$$

In order to obtain a better bound, we need to refine~\eqref{omega1}
and~\eqref{omega2} in this special case. Here, we use the notation in Section~\ref{proofs} without special indication. We also note that in this case $f$ is the $m$-th cyclotomic polynomial, and the degree of $K$ (or $f$) is $d=\varphi(m)$.

\begin{proof}[Proof of Theorem~\ref{Cassels2}]

Recall that, for a prime $\ell$, we have $\ell^e \|m$. In particular, $e=0$ when $\ell\nmid m$. By the basic theory of cyclotomic fields (for example, see~\cite[Chapter 2]{Washtington1982}), $f$ has a root modulo $\ell$ if and only if $f$ can be factored completely modulo $\ell$, and if and only if $\ell\equiv 1 \pmod {m/\ell^{e}}$. In particular, if
$\ell\equiv 1 \pmod {m/\ell^{e}}$, then $f$ has $\varphi(m/\ell^{e})$ distinct roots modulo $\ell$. Moreover, if $\ell\mid m$, then $\ell\equiv 1 \pmod {m/\ell^{e}}$ is possible only when $\ell= P(m)$, where, as before,
$P(m)$ denotes the largest prime divisor of $m$.

Combining the above considerations with~\cite[Corollary~2]{Stewart1991}, for a prime $\ell\nmid m$ and any integer $k\ge 1$, we have
$$
N(\ell^{k}) \le \left\{ \begin{array}{ll}
                d,  & \textrm{if $\ell\equiv 1 \pmod m$},\\
               0, & \textrm{otherwise}.
                 \end{array} \right.
$$
So, for a prime $\ell\nmid m$, we obtain
\begin{equation}\label{r1}
r_\ell(L)\le L\sum\limits_{k=1}^{\infty}\frac{d}{\ell^k}+dK_\ell(L) \le 2dL\ell^{-1}+dK_\ell(L).
\end{equation}

Next, for any prime number $\ell$ and integer $k\ge 1$, it is easy to see that $N(\ell^k)\le \ell N(\ell^{k-1})\le\cdots\le \ell^{k-1}N(\ell)$. Then,
for a prime $\ell \mid m$ and $\ell^e \|m$, we find that
%$$
%N(\ell)=\left\{ \begin{array}{ll}
%                \varphi(m/\ell^e)  &
%\substack{\textrm{if $\ell$ is the largest prime divisor of $m$}\\
%\textrm{and $\ell\equiv 1 \pmod{m/\ell^{e}}$}},\\
%                \\
%                0 & \textrm{otherwise};
%                 \end{array} \right.
%$$
$$
N(\ell)=\left\{ \begin{array}{ll}
                \varphi(m/\ell^e),  & \textrm{if $\ell = P(m)$ and $\ell\equiv 1 \pmod{m/\ell^{e}}$},\\
                \\
                0, & \textrm{otherwise};
                 \end{array} \right.
$$
and for $k\ge 2$,
$$
N(\ell^k) \le \left\{ \begin{array}{ll}
                \varphi(m/\ell^e)\ell^{k-1},  & \textrm{if $\ell=P(m)$ and $\ell\equiv 1 \pmod{m/\ell^{e}}$},\\
                \\
                0, & \textrm{otherwise}.
                 \end{array} \right.
$$

Thus, for a prime $\ell\mid m$ and $\ell^e \|m$, applying the same arguments as those in Section~\ref{proofs}, we derive that
$$
r_\ell(L)\le\left\{ \begin{array}{ll}
                (\varphi(m/\ell^e)+3)dL\ell^{-1}+dK_\ell(L),  &
                \textrm{if $\ell= P(m)$}
                \\& \textrm{and $\ell\equiv 1 \pmod{m/\ell^{e}}$},\\
                \\
                dK_\ell(L), & \textrm{otherwise}.
                 \end{array} \right.
$$
Therefore, comparing this inequality with \eqref{r1}, for any prime $\ell$, we deduce
$$
r_\ell(L)\le 4dL\delta(m)\ell^{-1}+dK_\ell(L),
$$
where $\delta(m)$ has been defined in Section~\ref{introduction}.

Then applying the same arguments as Section~\ref{proofs}, for $L\ge 51(2d+1)$, we can deduce the following analogue of ~\eqref{omega1} and~\eqref{omega2}
$$
t\ge \frac{c_1L}{d\log^{+} H}\quad \textrm{or} \quad t\ge L^{c_2/\delta(m)},
$$
where $c_1$ and $c_2$ are two absolute constants, and $H=H(f)$.
So, for any integer $Q\ge 3$, we can choose a prime $p$ satisfying
$$
p \le C^dH\(d\log Q\log^{+} H \)^d+H(\log Q)^{c d\delta(m)}
$$
for some absolute constants $c$ and $C$, and
such that $f$ has a root modulo $p$ and $p\nmid Q$.

Finally, applying the same arguments as Section~\ref{arbitrary} and noticing that $\h(\zeta_m)=0$, we get the following upper bound for the smallest such prime number $p$ satisfying~\eqref{embedding2}
$$
p\le \left(d\sum\limits_{i=1}^{n}\h(\beta_i)+dn\right)^{O(d\delta(m))},
$$
where $d=\varphi(m)$.
\end{proof}

\section{Comments}

It is certainly interesting to understand how tight our
bounds are. Denoting by $p_k$ the $k$-th prime number and
defining
$$
\beta_i = \prod_{r=0}^{R-1} p_{nr+i}, \qquad i =1, \ldots, n,
$$
for some sufficiently large integer parameter $R$, we see from the
prime number theorem that
$$
\prod\limits_{i=1}^{n} \beta_i = \exp\((1+o(1)) nR \log (nR)\).
$$
On the other hand, the smallest prime $p$ with
$$
|\sigma(\beta_i)|_p=1,\quad \textrm{for $1\le i\le n$},
$$
obviously satisfies
$$
p > p_{nR} = (1+o(1)) nR \log (nR) =
(1+o(1)) \sum\limits_{i=1}^{n}\h(\beta_i).
$$

Here is a less obvious example, that illustrates the
sharpness of our results in Section~\ref{primitive0} for $d=2$. Although in our application
we do not need so strong result,
by a recent groundbreaking results of Maynard~\cite{Maynard}  and
Zhang~\cite{Zhang}, there
exists a positive integer $t$ such that $k+t$ and $k-t$ are both prime
for infinitely many positive integers $k$.
Take $k$ large enough and consider the following quadratic polynomial
$f_k(x)=x^2-2kx+t^2$ with height $2k$.
Its splitting field is $K=\Q(\sqrt{(k+t)(k-t)})$,
so each $\al$ satisfying $K=\Q(\al)$ is of the form $$\al=a+b\be \quad \text{with} \quad
\be=\sqrt{(k+t)(k-t)}$$ and rational $a$ and $b \ne 0$. We claim that $H(\al)>n/3$ for all such $\al$.
To prove this,
assume that $a_2x^2+a_1x+a_0 \in \Z[x]$, where $a_2>0$, is the minimal polynomial of $\al=a+b\be$
and write $b=b_0/b_1$ with coprime $b_0 \in \Z \setminus \{0\}$ and $b_1 \in \N$.
Note that the discriminant of $a_2x^2+a_1x+a_0$ is
\begin{align*}
a_1^2-4a_0a_2 &=a_2^2(a+b \be-a+b\be)^2\\
&=a_2^2(2b\be)^2=\frac{4a_2^2b_0^2(k+t)(k-t)}{b_1^2}.
\end{align*}
In particular, this yields that $b_1^2\mid 4a_2^2(k+t)(k-t)$.

Now, if  $k+t$ or $k-t$ is a prime divisor of $b_1$, then
this divisor also divides $a_2$. Thus, $H(\al) \geq |a_2| \ge k-t>k/2$, which is stronger than claimed.
If, otherwise, neither $k+t$ nor $k-t$ divides $b_1$, then $4a_2^2/b_1^2$ is an integer, so
$b_1^2 \le 4a_2^2 b_0^2$. It follows that
\begin{align*}
(k+t)(k-t) & = \frac{b_1^2}{4 a_2^2 b_0^2} (a_1^2-4a_0a_2) \le a_1^2 +4 |a_0| a_2
\\& \le 5\max\{|a_0|,|a_1|,|a_2|\}^2 =5H(\al)^2.
\end{align*}
This implies the inequality $H(\al)>k/3$, as claimed (provided that
$k$ is large enough).
Hence, our example shows that the exponent $(d-1)D/d$
in Corollary~\ref{pag} is sharp
for $d=2$ (in this case we automatically have $D=2$).

\section*{Acknowledgements}

The research of A.~D. was supported by the Research Council
of Lithuania Grant MIP-068/2013/LSS-110000-740.
The research of M.~S. and I.~E.~S. was supported by Australian
Research Council Grant DP130100237.

%We want to thank the referee for careful reading and very useful comments.


\begin{thebibliography}{99}

\bibitem{Bellaiche2013}
J. Bella\"iche, \emph{Th\'eor\`eme de Chebotarev et complexit\'e de Littlewood}, Preprint, 2013, \url{http://arxiv.org/abs/1308.1022}.

\bibitem{wea}
J. W. Brawley and S. Gao, \emph{On density of primitive elements for field extensions,} Preprint, 2004,
\url{http://www.math.clemson.edu/~sgao/papers/prim-ele.pdf}.

\bibitem{ca}
R. D. Carmichael and T. E. Mason, \emph{Note on the roots of algebraic equations,} Bull. Amer. Math. Soc. \textbf{21} (1914), 14--22.

\bibitem{Cassels1976}
J. W. S. Cassels, \emph{An embedding theorem for fields}, Bull. Austral. Math. Soc. \textbf{14} (1976), 193--198.

\bibitem{Everest2003}
G. Everest, A. J. van der Poorten, I. E. Shparlinski and T. Ward, \emph{Recurrence sequences}, Amer. Math. Soc., 2003.

\bibitem{Klinger1967}
A. Klinger, \emph{The Vandermonde matrix}, Amer. Math. Monthly, \textbf{74} (1967), 571--574.

\bibitem{Konyagin1979}
S. V. Konyagin, \emph{On the number of solutions of an $n$th degree congruence with one unknown}, Matem. Sb. \textbf{109(151)} (1979), 171--187; English version: Math. USSR-Sb. \textbf{37} (1980), 151--166.

\bibitem{Konyagin1994}
S. V. Konyagin and T. Steger, \emph{On polynomial congruences}, Math. Notes \textbf{55} (1994), 596--600.

\bibitem{la}
E. Landau, \emph{\" Uber eine Aufgabe der Funktionentheorie,} Tohoku Math. J. \textbf{5} (1914), 97--116.
 \bibitem{Loxton1977}
J. H. Loxton and A. J. van der Poorten, \emph{On the growth of recurrence sequences}, Math. Proc. Camb. Phil. Soc. \textbf{81} (1977), 369--376.

\bibitem{Mahler1964}
K. Mahler, \emph{An inequality for the discriminant of a polynomial}, Michigan
Math. J. \textbf{11} (1964), 257--262.

\bibitem{Maynard} J. Maynard,  \emph{Small gaps between primes},
Annals of Math., \textbf{181} (2015),  383--413.


\bibitem{Odlyzko1975}
A. M. Odlyzko, \emph{Some analytic estimates of class numbers and discriminants}, Invent. Math. \textbf{29} (1975), 275--286.

\bibitem{Poorten1991}
A. J. van der Poorten and H.-P. Schlickewei, \emph{ Additive relations in fields}, J. Austral. Math. Soc. \textbf{51} (1991), 154--170.

\bibitem{Poorten1992}
A. J. van der Poorten and I. E. Shparlinski, \emph{On the number of zeros of exponential polynomials and related questions}, Bull. Austral. Math. Soc. \textbf{46} (1992), 401--412.

\bibitem{Poorten1995}
A. J. van der Poorten and I. E. Shparlinski, \emph{On sequences of polynomials defined by certain recurrence relations}, Acta Sci. Math. (Szeged) \textbf{61} (1995), 77--103.

\bibitem{sp}
W. Specht, \emph{Absch\" atzungen der Wurzeln algebraischer Gleichungen,} Math. Z. \textbf{52} (1949), 310--321.

\bibitem{Stewart1991}
C. L. Stewart, \emph{On the number of solutions of polynomial congruences and Thue equations}, J. Amer. Math. Soc. \textbf{4} (1991), 793--835.

\bibitem{wa}
M. Waldschmidt, \emph{Diophantine approximation on linear algebraic groups. Transcendence properties of the exponential function in several variables,} Grundlehren der Mathematischen Wissenschaften \textbf{326}, Springer, Berlin, 2000.

\bibitem{Washtington1982}
L. C. Washington, \emph{Introduction to cyclotomic fields}, Graduate Texts in Mathematics \textbf{83}, Springer, New York, 1982.

\bibitem{Vaaler2013}
J. D. Vaaler and M. Widmer, \emph{A note on small generators of number fields}, Diophantine Methods, Lattices, and Arithmetic Theory of Quadratic Forms, Contemporary Mathematics \textbf{587}, Amer. Math. Soc., Providence, RI, 2013.

\bibitem{wid}
M. Widmer, \emph{Counting primitive points of bounded height,} Trans. Amer. Math. Soc.  \textbf{362} (2010),
4793--4829.

\bibitem{Xylouris2011}
T. Xylouris, \emph{On the least prime in an arithmetic progression and estimates for the zeros of Dirichlet $L$-functions}, Acta Arith. \textbf{150} (2011), 65--91.

\bibitem{Zhang}
Y. Zhang, \emph{Bounded gaps between primes}, Ann. of Math., \textbf{179} (2014),  1121--1174.


\end{thebibliography}
\end{document}